        \title[Equivariant Morse theory, Vietoris--Rips complexes, universal spaces]%
              {Equivariant Morse theory on Vietoris--Rips complexes \& universal spaces for proper actions}
       \author{Marco Varisco}
      \address{University at Albany, State University of New York, USA\bigskip}
\email{\hemail{mvarisco@albany.edu}}
\urladdr{\surl{albany.edu/~mv312143}}
       \author{Matthew C.\ B.\ Zaremsky}
\email{\hemail{mzaremsky@albany.edu}}
\urladdr{\surl{albany.edu/~mz498674}}
\subjclass[2020]{\MSC{20F65}, \MSC{20F67}, \MSC{57M07}, \MSC{55R35}}
         \date{June 9, 2021}
\newcommand*{\hurl}  [2][www.] {\href{http://#1#2}{\nolinkurl{#2}}}
\newcommand*{\surl}  [2][www.]{\href{https://#1#2}{\nolinkurl{#2}}}
\newcommand*{\hemail}[1]{\href{mailto:#1}{\nolinkurl{#1}}}
\newcommand*{\DOI}   [1]{\href{https://dx.doi.org/#1}{\nolinkurl{#1}}}
\newcommand*{\arXiv} [1]{\href{https://www.arxiv.org/abs/#1}{\nolinkurl{arXiv:#1}}}
\newcommand*{\MSC}   [1]{\href{https://mathscinet.ams.org/msc/msc2020.html?t=#1}{#1}}
\newcommand*{\Zbl}   [1]{Zbl~\href{https://zbmath.org/?q=an\%3A#1}{#1}}
\setlist{leftmargin=*}
\setlist[enumerate]{label=(\roman*)}
\tikzset{/tikz/commutative diagrams/cells={/tikz/font=\everymath\expandafter{\the\everymath\displaystyle}}}
\numberwithin{equation}{section}
\newcommand*{\definetheorem}[3][equation]{%
  \newaliascnt{#2}{#1}
  \newtheorem{#2}[#2]{#3}
  \aliascntresetthe{#2}
  \expandafter\def\csname #2autorefname\endcsname{#3}
}
\newcommand*{\definetheoremsame}[2][equation]{%
  \definetheorem[#1]{\zap@space#2 \@empty}{\capitalisewords{#2}}
}
\theoremstyle{plain}
\theoremstyle{definition}
\newcommand*{\define}[5]{%
  \ifstrequal{#2}{*}{\expandafter#1\expandafter*}{\expandafter#1}%
  \csname#4#5\endcsname{#3{#5}}
}
\newcommand*{\MOR}   [4][]{#2\colon#3\TO[#1]#4}
\newcommand*{\TO}    [1][]{\stackrel{#1}{\longrightarrow}}
\newcommand*{\MAPSTO}[1][]{\stackrel{#1}{\longmapsto}}
\newcommand*{\INTO}  [1][\quad]{\xhookrightarrow{#1}}
\DeclarePairedDelimiterX\SET[2]{\{}{\}}{\,#1\;\delimsize\vert\;#2\,}
\DeclarePairedDelimiter\REAL{\lvert}{\rvert}
\DeclareMathOperator*{\smallcoprod}{\textstyle\coprod}
\newcommand*{\N}{\mathbb{N}}
\newcommand*{\Z}{\mathbb{Z}}
\newcommand*{\Q}{\mathbb{Q}}
\newcommand*{\R}{\mathbb{R}}
\newcommand*{\Eu}[1]{\underline{E}#1}
\newcommand*{\rips}{\mathcal{VR}}
\newcommand*{\poset}{\mathcal{P}_{\text{fn}}}
\newcommand*{\da}{{\mkern1.5mu\downarrow}}
\newcommand*{\dst} {\operatorname{st}_\da\!}
\newcommand*{\dlk} {\operatorname{lk}_\da\!}
\newcommand*{\dulk}{\operatorname{lk}_\da^\vee\!}
\newcommand*{\ddlk}{\operatorname{lk}_\da^\wedge\!}
\newcommand*{\dulkposet} {\mathcal{L}_\da^\vee}
\newcommand*{\ddlkposet} {\mathcal{L}_\da^\wedge}
\newif\ifmorseheight
\newif\ifmorseaction
\newif\ifmorsedlinks
\newif\ifmorsedstars
\newif\ifmorselevels
\newif\ifmorsetoplevel
\newif\ifmorsemidlevel
\newif\ifmorseconedoff
\newcommand{\morse}[1][]{
\pgfkeys{/morse,
  height=true,
  action=true,
  dlinks=true,
  dstars=true,
  levels=true,
toplevel=true,
midlevel=true,
conedoff=true,
#1}

\colorlet{vertex}{black}
\colorlet{dlink}{magenta}
\colorlet{top level}{green}
\colorlet{mid level}{cyan}
\colorlet{low level}{orange}

\tikzset{%
       edge/.style={semithick},
       lkst/.style={very thick},
  back lkst/.style={thick, dotted},
      point/.style={radius=0.055},
fixed point/.style={radius=0.085},
}

\ifmorseaction
\foreach \y in {2, -2.6}
{
\draw[black!66, -{Stealth[bend=1.2, scale=.75]}]
  (-.3,\y+.3) arc [x radius=0.3, y radius=0.075, start angle=180, delta angle=160];
\draw[black!66, -{Stealth[bend=1.2, scale=.75]}, dash pattern=on 8 off 2]
  (+.3,\y+.3) arc [x radius=0.3, y radius=0.075, start angle=000, delta angle=160];
\draw[black!66, densely dotted]
  (0,\y) -- (0,\y+.17) (0,\y+.27) -- (0,\y+.58);
}
\fi

\ifmorselevels
\foreach \x in {-1, 1}
{
\fill[low level!10]
  (\x,-.7) ellipse [x radius=0.3, y radius=0.04];
}
\fill[low level!10]
   (0,-.7) ellipse [x radius=0.7, y radius=0.08];
\ifmorsemidlevel
\foreach \x in {-1, 1}
{
\fill[mid level!10]
  (\x,0)   ellipse [x radius=1.0, y radius=0.11];
}
\fi
\ifmorsetoplevel
\foreach \x in {-1, 1}
{
\fill[top level!10]
  (\x,+.7) ellipse [x radius=0.3, y radius=0.04];
\draw[top level, dotted]
  (\x,+.7) ellipse [x radius=0.3, y radius=0.04, start angle=0, delta angle=180];
}
\fill[top level!10]
   (0,+.7) ellipse [x radius=0.7, y radius=0.08];
\draw[top level, dotted]
   (0,+.7) ellipse [x radius=0.7, y radius=0.08, start angle=0, delta angle=180];
\filldraw[fill=top level!22, draw=top level]
  (0,-2) -- (-2,0) -- (-1.3,+.7)
  arc [start angle=180, delta angle=180, x radius=0.3, y radius=0.04]
  arc [start angle=180, delta angle=180, x radius=0.7, y radius=0.08]
  arc [start angle=180, delta angle=180, x radius=0.3, y radius=0.04]
  -- (2,0) -- cycle;
\fi
\ifmorsemidlevel
\foreach \x in {-1, 1}
{
\draw[mid level, dotted]
  (\x,0)   ellipse [x radius=1.0, y radius=0.11, start angle=0, delta angle=180];
}
\filldraw[fill=mid level!22, draw=mid level]
  (0,-2) -- (-2,0)
  arc [start angle=180, delta angle=180, x radius=1.0, y radius=0.11]
  arc [start angle=180, delta angle=180, x radius=1.0, y radius=0.11]
  -- cycle;
\fi
\foreach \x in {-1, 1}
{
\draw[low level, dotted]
  (\x,-.7) ellipse [x radius=0.3, y radius=0.04, start angle=0, delta angle=180];
}
\draw[low level, dotted]
   (0,-.7) ellipse [x radius=0.7, y radius=0.08, start angle=0, delta angle=180];
\filldraw[fill=low level!22, draw=low level]
  (0,-2) -- (-1.3,-.7)
  arc [start angle=180, delta angle=180, x radius=0.3, y radius=0.04]
  arc [start angle=180, delta angle=180, x radius=0.7, y radius=0.08]
  arc [start angle=180, delta angle=180, x radius=0.3, y radius=0.04]
  -- cycle;
\ifmorseheight
\draw (3.5-.15,+.7) -- (3.5+.15,+.7) node[right] {$r$};
\draw (3.5-.15,0.0) -- (3.5+.15,0.0) node[right] {$s$};
\draw (3.5-.15,-.7) -- (3.5+.15,-.7) node[right] {$t$};
\draw[densely dashed] (3.5-.15-.1,+.7) -- (1.3+.2,+.7);
\draw[densely dashed] (3.5-.15-.1,0.0) -- (2.0+.2,0.0);
\draw[densely dashed] (3.5-.15-.1,-.7) -- (1.3+.2,-.7);
\fi
\fi

\draw[edge] (0,-2) -- (-2,0) -- (0,2) -- (2,0) -- cycle;
\draw[edge] (-1,+1) -- (+1,-1);
\draw[edge] (-1,-1) -- (+1,+1);

\ifmorsedlinks\ifmorsedstars
  \colorlet{vertex}{dlink}
\fi\fi
\foreach \y in {-2, 0, 2}
  \fill[vertex]  (0,\y) circle [fixed point];
\foreach \x/\y in {-2/0, -1/-1, -1/1, 1/-1, 1/1, 2/0}
  \fill[vertex] (\x,\y) circle [point];

\ifmorseconedoff
\filldraw[fill=dlink!22, draw=dlink, lkst]
  (0,0) -- (-.7,-.7)
  arc [start angle=180, delta angle=180, x radius=0.7, y radius=0.08]
  -- cycle;
\draw[dlink, back lkst]
   (0,-.7) ellipse [x radius=0.7, y radius=0.08, start angle=0, delta angle=180];
\draw[dlink, lkst] (-2,0) -- (-1.3,-.7);
\draw[dlink, lkst] (+2,0) -- (+1.3,-.7);
\foreach \x/\y in {-1.3/-.7, +1.3/-.7, -.7/-.7, +.7/-.7, -2/0, +2/0}
{
\fill[dlink] (\x,\y) circle [point];
}
\fill[dlink]  (0,0)  circle [fixed point];
\fi

\ifmorsedlinks
\foreach \x/\y in {-1/1, 0/2, 0/0, 1/1}
{
\draw[dlink, lkst]
  (\x-.4,\y-.4) arc [x radius=0.4, y radius=0.066, start angle=180, delta angle=180];
\foreach \dir in {-1, 1}
{
\fill[dlink] (\x+\dir*.4,\y-.4) circle [point];
}
\ifmorsedstars
\filldraw[draw=dlink, fill=dlink!22, lkst]
  (\x,\y) --
  (\x-.4,\y-.4) arc [x radius=0.4, y radius=0.066, start angle=180, delta angle=180]
  -- cycle;
\fi
\draw[dlink, back lkst]
  (\x+.4,\y-.4) arc [x radius=0.4, y radius=0.066, start angle=000, delta angle=180];
}
\foreach \x/\y/\dir in {-2/0/1, -1/-1/1, 1/-1/-1, 2/0/-1}
{
\fill[dlink] (\x+\dir*.4,\y-.4) circle [point];
\ifmorsedstars
\draw[dlink, lkst] (\x,\y) -- (\x+\dir*.4,\y-.4);
\fi
}
\fi

\ifmorseheight
\draw[-{Stealth}, edge] (3.5,-2.6) -- (3.5,+2.6) node[anchor=north west, inner sep=6] {$\mathbb{R}$};
\ifmorselevels
\draw[-{To}] (0.8,1.6) -- (3.2,1.6) node[above, midway] {$h$};
\else
\draw[-{To}] (2.3,0.0) -- (3.2,0.0) node[above, midway] {$h$};
\fi
\fi
}
\begin{document}

\begin{abstract}
We formalize an equivariant version of Bestvina--Brady discrete Morse theory, and apply it to Vietoris--Rips complexes in order to exhibit finite universal spaces for proper actions for all asymptotically CAT(0) groups.
\end{abstract}


\maketitle
\tableofcontents
\thispagestyle{empty}


\section{Introduction}

Universal spaces for proper actions play an important role in geometric group theory, equivariant homotopy theory, and the Baum--Connes and Farrell--Jones Conjectures in $K$-theory and $L$-theory.
Let us review the definition:
a \emph{universal space for proper actions}
(aka universal space for the family of finite subgroups)
of a discrete group~$G$ is a proper $G$-CW complex~$\Eu{G}$ such that, for each finite subgroup~$H\le G$, the $H$-fixed point set~$(\Eu{G})^H$ is contractible.
Recall that a $G$-CW complex is a CW complex with a $G$-action such that, for all open cells~$e$ and all~$g\in G$, if $g\cdot e\cap e\neq \emptyset$ then $g\cdot x=x$ for all~$x\in e$.
A $G$-CW complex is proper if and only if all point stabilizers are finite.
For any group~$G$, universal spaces for proper actions exist and are unique up to $G$-homotopy, and they satisfy the following universal property: for any proper $G$-CW complex~$X$ there is up to $G$-homotopy exactly one $G$-map $X\TO\Eu{G}$.
If~$G$ is torsion-free then by definition $\Eu{G}$ is a contractible free $G$-space, i.e., a universal cover of a classifying space~$BG=K(G,1)$.
For more information on universal spaces we refer to Lück's survey article~\cite{L}.

A central question is whether a given group~$G$ has a universal space for proper actions that is finite.
Recall that a $G$-CW complex is called finite if it is cocompact, or, equivalently, if it only has finitely many $G$-orbits of cells.
If $G$ is torsion-free, having a finite~$\Eu{G}$ means having a finite~$BG$, i.e., being of type~$F$; for this reason groups having a finite~$\Eu{G}$ are sometimes said to be of type $\underline{F}$ in the literature.

Meintrup and Schick~\cite{meintrup02} proved that hyperbolic groups have finite universal spaces for proper actions, as claimed in~\cite{baum94}*{Section~2}.
This is also known to be true for CAT(0) groups---it follows from basic properties of CAT(0) spaces and a result of Ontaneda~\cite{O}, as we explain below.
A natural simultaneous generalization of hyperbolic and CAT(0) groups is given by asymptotically CAT(0) groups, which were introduced and studied by Kar~\citelist{\cite{kar08} \cite{kar11}} and are reviewed at the end of this introduction.
The class of asymptotically CAT(0) groups contains all hyperbolic groups and all CAT(0) groups, and is closed under taking finite products, amalgamated free products over finite subgroups, HNN extensions along finite subgroups, and relatively hyperbolic overgroups.
Our main result is the following.

\begin{theorem}
\label{main-intro}
All asymptotically CAT(0) groups have finite universal spaces for proper actions.
\end{theorem}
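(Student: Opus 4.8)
The plan is to realize a finite model for $\Eu{G}$ as a Vietoris--Rips complex of~$G$ at a large enough scale, and to verify the required contractibility of fixed point sets by the equivariant discrete Morse theory developed above. Fix a finite generating set of~$G$ with word metric~$d$, and for~$r>0$ let $\rips_r(G)$ be the Vietoris--Rips complex at scale~$r$: its simplices are the nonempty finite subsets of~$G$ of diameter at most~$r$, and~$G$ acts by left translation. Since balls in~$(G,d)$ are finite, $\rips_r(G)$ is locally finite and~$G$ acts on it cocompactly; the stabilizer of a simplex $\{g_0,\dots,g_n\}$ embeds into the symmetric group on its vertices and is therefore finite, so all cell stabilizers are finite; and after one barycentric subdivision, so that the $G$-action satisfies the cell condition, $\rips_r(G)$ becomes a finite proper $G$-CW complex. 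It thus suffices to prove that, for all sufficiently large~$r$, the fixed point set $\rips_r(G)^H$ is contractible for every finite subgroup $H\le G$.

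Because~$G$ is asymptotically CAT(0), Kar's quasi-center construction for asymptotically CAT(0) spaces---iterating a coarse circumcenter---yields a constant~$C$, depending only on the asymptotically CAT(0) function of~$G$, such that every finite subgroup $H\le G$ has a quasi-center: a vertex $x_H\in G$ with $d(x_H,\eta x_H)\le C$ for all $\eta\in H$. Two consequences follow. First, after conjugating~$H$ so that~$x_H$ lies in a fixed cocompact fundamental set for the action on a Cayley graph, every finite subgroup becomes conjugate into one fixed finite ball of~$(G,d)$; hence~$G$ has only finitely many conjugacy classes of finite subgroups, and since $\rips_r(G)^{gHg^{-1}}=g\cdot\rips_r(G)^H$ it is enough to handle one representative~$H$ of each class. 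Second, as soon as $r\ge C$ the finite set $H\cdot x_H$ is an $H$-invariant simplex, so $\rips_r(G)^H\neq\emptyset$. Fix such an~$H$; it remains to find~$r_H$, depending only on the asymptotically CAT(0) function, beyond which $\rips_r(G)^H$ is contractible, and then to take~$r$ above~$C$ and above all finitely many~$r_H$.

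To this end, apply the equivariant Morse lemma to $\rips_r(G)$ with the $H$-invariant height function $h(v)=\max_{\eta\in H}d(v,\eta x_H)$ on vertices, promoted to an $H$-invariant Morse function by the standard equivariant tie-breaking (the fixed point sets being unaffected by the subdivision needed for the CW structure). Since $h(v)\ge d(v,x_H)$, the function~$h$ attains its minimum within distance~$C$ of~$x_H$, and the bottom of the Morse filtration is, up to $H$-homotopy, a closed star of the $H$-invariant simplex $H\cdot x_H$; the $H$-fixed set of such a closed star is a join of the relevant link with the barycentric face of $H\cdot x_H$ fixed by~$H$, hence contractible. Away from the bottom, the descending link $\dlk v$ of a vertex---and, equivariantly, its fixed-point subcomplexes under subgroups of $\Stab v$---is shown contractible by pushing~$v$ together with every Rips-close vertex toward~$x_H$ along geodesics of a Cayley graph, $\Stab v$-equivariantly: the asymptotically CAT(0) comparison inequalities bound the drift between the pushed points, so once~$r$ exceeds a threshold depending only on those inequalities, the pushed vertex is Rips-close to all of $\dlk v$, making $\dlk v$ equivariantly contractible. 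By the equivariant Morse lemma the inclusion of the bottom into $\rips_r(G)$ is then a $G$-homotopy equivalence; passing to $H$-fixed points shows that $\rips_r(G)^H$ is homotopy equivalent to the contractible $H$-fixed set of a closed star of $H\cdot x_H$. Hence $\rips_r(G)^H$ is contractible, and for~$r$ above all the finitely many thresholds $\rips_r(G)$ is a finite model for $\Eu{G}$.

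I expect the main obstacle to be exactly this equivariant descending-link analysis. One must produce a height function that is \emph{genuinely} $H$-invariant yet still distance-like enough to have controlled descending links; one must run the coarse geodesic contraction $\Stab v$-equivariantly---pushing toward an appropriate quasi-center rather than blindly toward~$x_H$---so that the fixed-point sets of the descending links, not merely the descending links themselves, come out contractible; and one must absorb the asymptotically CAT(0) error, which is only sublinear in the scale rather than bounded, into a single scale-independent choice of~$r$, which is precisely why the contraction has to proceed one Morse step at a time instead of through a single geodesic homotopy. Everything else---the $G$-CW structure, cocompactness, properness, nonemptiness of the fixed sets, the uniform quasi-center bound, and the reduction to finitely many conjugacy classes of finite subgroups---is comparatively routine.
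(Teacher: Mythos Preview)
There is a genuine gap, and it is exactly the one the paper goes out of its way to avoid. You work throughout with the word metric on~$G$ and appeal to ``the asymptotically CAT(0) comparison inequalities'' for geodesics in a Cayley graph. But being asymptotically CAT(0) is a property of the ambient geodesic space~$X$, not of~$(G,d_{\mathrm{word}})$, and it is \emph{not} preserved under quasi-isometry: the paper's \autoref{qi} records that~$\Z^2$ with the taxi-cab word metric is not asymptotically CAT(0), even though it is quasi-isometric to the CAT(0) plane. So your contraction step---``push $v$ and every Rips-close vertex toward $x_H$ along geodesics of a Cayley graph; the asymptotically CAT(0) comparison inequalities bound the drift''---is unjustified, because no such inequalities hold for Cayley-graph geodesics. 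The same objection applies to invoking Kar's quasi-center construction inside~$(G,d_{\mathrm{word}})$: her argument lives in~$X$. In fact the paper poses as an open question (\autoref{Rips(G)}) whether $\rips_t(G)$ with respect to a word metric is an~$\Eu{G}$, so your proposed route is, as stated, attempting something not known.

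The paper's fix is simple but essential: it works with the orbit $X_0=G\cdot x_0\subset X$ equipped with the metric \emph{induced from~$X$}, so that geodesics and midpoints in~$X$ are available and the asymptotically CAT(0) inequality actually applies (\autoref{main-precise}). Beyond this metric issue, the overall architecture also differs. Rather than choosing, for each finite $H$, an $H$-specific height function and arguing $H$-fixed-set contractibility one subgroup at a time, the paper uses a \emph{single} $G$-Morse function on $\rips_\infty(X_0)$ built from $(\diam,-\card)$ on the poset $\poset(X_0)$, proves via the link criterion (\autoref{link-criterion}) that all descending links above scale~$t$ are $\Stab_G(S)$-contractible, and concludes by \autoref{morse-useful} that $\rips_t(X_0)\hookrightarrow\rips_\infty(X_0)$ is a $G$-homotopy equivalence. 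The identification $\rips_\infty(X_0)=\Eu{G}$ then comes for free from the directed-poset argument in \autoref{Rips=E_G}(i), so the $H$-fixed-set contractibility is handled uniformly and without any per-$H$ Morse analysis. Your strategy of treating each $H$ separately is not unreasonable in spirit, but note the slip where you conclude a ``$G$-homotopy equivalence'' from an $H$-invariant Morse function; at best you would get an $H$-homotopy equivalence, and you would still need a uniform-in-$H$ scale, which the paper's single $G$-equivariant argument delivers automatically.
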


More precisely, in \autoref{main-precise} we show that if $G$ acts properly and cocompactly by isometries on an asymptotically CAT(0) geodesic metric space~$X$, then the Vietoris--Rips complex~$\rips_t(G\cdot x_0)$ of any $G$-orbit in~$X$ is a finite universal space for proper actions for any sufficiently large~$t>0$.
We consider $G\cdot x_0$ with the induced metric from~$X$.
Incidentally, by the Schwarz--Milnor Lemma this metric is quasi-isometric to the word metric on~$G$, but this is not enough to conclude that~$\rips_t(G)$ is also an~$\Eu{G}$; see \autoref{Rips(G)} for more in this vein.

The proof of our main result relies on an equivariant version of Bestvina--Brady discrete Morse theory, which we develop in \autoref{MORSE} and believe to be of independent interest, and which we then apply to Vietoris--Rips complexes in \autoref{RIPS}.
Since its introduction in~\cite{bestvina97}, Bestvina--Brady discrete Morse theory has proven to be an essential tool in a variety of applications for deducing ``global'' topological properties from ``local'' topological information.
Even though the equivariant version we discuss here relies on the same constructions as in~\cite{bestvina97}, some care is needed in formulating the appropriate equivariant definitions and statements, and in checking the equivariance of the constructions.

For hyperbolic groups, \autoref{main-intro} was proved by Meintrup and Schick~\cite{meintrup02}, as we already mentioned.
Their argument also uses Vietoris--Rips complexes, but does not use discrete Morse theory, so in the hyperbolic case our approach can be viewed as an independent Morse-theoretic proof of their result.

In the CAT(0) case, if $G$ acts properly and cocompactly by isometries on a CAT(0) geodesic metric space~$X$ in such a way that $X$ has the structure of a $G$-CW complex, then $X$ itself is a finite~$\Eu{G}$.
This is well known and follows at once from the fact~\cite{BH}*{Corollary~II.2.8, page~179} that any finite group acting on a complete CAT(0) space has nonempty and convex fixed point set, and completeness follows from the assumptions on the action by~\cite{BH}*{I.8.4(1), page 132}.
If we drop the assumption about the $G$-CW structure on~$X$, then \cite{O}*{Proposition~A, page~48} shows that $X$ is $G$-homotopy equivalent to a finite $G$-CW complex, which is consequently a finite~$\Eu{G}$.
To the best of our knowledge, however, the fact that Vietoris--Rips complexes provide universal spaces for proper actions for CAT(0) groups seems to be new.

An immediate consequence of \autoref{main-intro} is that asymptotically CAT(0) groups have only finitely many conjugacy classes of finite subgroups---a result that was already proved by Kar~\cite{kar11}*{Theorem~10}.

Using a theorem of Riley~\cite{riley03}, Kar showed in~\cite{kar11}*{Proposition~13} that all asymptotically CAT(0) groups are of type~$F_\infty$, i.e., have a classifying space with only finitely many cells in each dimension.
This result was improved by the second author, who in~\cite{zaremsky18}*{Theorem~6.2} showed that all asymptotically CAT(0) groups are of type~$F_*$, i.e., act properly and cocompactly on a contractible complex.
Our approach here relies heavily on the ideas from~\cite{zaremsky18}, but is completely independent from the results proved there.
Our \autoref{link-criterion} and \autoref{main-precise} can be thought of as equivariant generalizations of~\cite{zaremsky18}*{Theorems~3.5 and~6.2}.

\medskip

We conclude this introduction by reviewing asymptotically CAT(0) spaces and groups, and a select few applications of our main result.
The idea behind the definition of asymptotically CAT(0) is to use a comparison triangle inequality, just like in the definition of CAT(0), but to allow an error term that grows sublinearly with the size of the triangles.

More precisely, let $X$ be a geodesic metric space.
Let $\Delta$ be a geodesic triangle in~$X$ with vertices $x_1$, $x_2$, and~$x_3$.
The corresponding \relax{comparison triangle}~$\overline{\Delta}$ is a geodesic triangle in~$\R^2$ with vertices $\overline{x_1}$, $\overline{x_2}$, and $\overline{x_3}$, such that $d(x_i,x_j)=d(\overline{x_i},\overline{x_j})$ for all $i,j\in\{1,2,3\}$.
Given a point $p\in \Delta$ on the geodesic from $x_i$ to $x_j$, the corresponding \relax{comparison point} in~$\overline{\Delta}$ is the point~$\overline{p}$ on the geodesic from $\overline{x}_i$ to~$\overline{x}_j$ with $d(\overline{x_i},\overline{p})=d(x_i,p)$.
Set
\(
d(\Delta)=\max\SET{d(x_i,x_j)}{1\le i< j\le 3}
\).

\begin{definition}[asymptotically CAT(0) spaces and groups]
\label{asymp-CAT(0)}
A geodesic metric space~$X$ is \emph{asymptotically CAT(0)} if there exists a function $\MOR{\sigma}{\R}{\R}$ that is sublinear, i.e.,  $\lim_{t\to\infty}\frac{\sigma(t)}{t}=0$, and such that for any geodesic triangle~$\Delta$ in~$X$ and any points $p,q\in \Delta$ we have
\[
d(p,q)\le d(\overline{p},\overline{q})+\sigma(d(\Delta))
\,.
\]
An \emph{asymptotically CAT(0) group} is a group that admits a proper and cocompact action by isometries on an asymptotically CAT(0) geodesic metric space.
\end{definition}

If we require $\sigma(t)=0$ for all~$t$ then we recover the definition of CAT(0).

Kar defined a metric space to be asymptotically CAT(0) if all of its asymptotic cones are CAT(0), and then showed in~\cite{kar11}*{Theorem~8} that for geodesic metric spaces this quicker definition is equivalent to the more hands-on \autoref{asymp-CAT(0)} above.

\begin{remark}[invariance under quasi-isometries]
\label{qi}
As with being CAT(0), being asymptotically CAT(0) is not a quasi-isometry invariant of spaces.
For example, $\Z^2$ with the taxi-cab distance (i.e., the word metric with respect to the standard generating set) is not asymptotically CAT(0), but it is quasi-isometric to the CAT(0) euclidean plane~$\R^2$.
Moreover, being CAT(0) is not a quasi-isometry invariant of groups; see, e.g., \cite{piggott10}*{Remark~1}.
To the best of our knowledge, it is an open question whether the non-CAT(0) examples in loc.\ cit.\ are asymptotically CAT(0), and, more generally, whether being asymptotically CAT(0) is a quasi-isometry invariant of groups.
Also the question of whether being CAT(0) or asymptotically CAT(0) is a commensurability invariant of groups seems to be open.
\end{remark}

As mentioned earlier, Kar proved that the class of asymptotically CAT(0) groups is closed under the following operations:
\begin{itemize}
\item
if $G$ and~$H$ are asymptotically CAT(0), then so is their product~$G\times H$~\cite{kar08}*{Proposition~24};
\item
if $G$ and~$H$ are asymptotically CAT(0), and $F$ is a finite group together with monomorphisms $F\to G$ and~$F\to H$, then the amalgamated free product $G *_F H$ is asymptotically CAT(0)~\cite{kar11}*{Theorem~19};
\item
if $G$ is asymptotically CAT(0), and $F$ is a finite group together with monomorphisms $F\rightrightarrows G$, then the HNN extension $G *_F$ is asymptotically CAT(0)~\cite{kar11}*{Theorem~19};
\item
if $G$ is relatively hyperbolic with respect to a subgroup~$H$, and $H$ is asymptotically CAT(0), then so is~$G$~\cite{kar11}*{Theorem~20}.
\end{itemize}
From this one gets many examples of groups for which \autoref{main-intro} was not previously known.

\medskip

Finally, we highlight a few known consequences of our main result.
Assume that a group~$G$ has a finite universal space for proper actions, which is true if $G$ is asymptotically CAT(0) thanks to \autoref{main-intro}.
Then:
\begin{itemize}
\item
\emph{(Equivariant stable homotopy theory)}
The $G$-sphere spectrum is a small object in the proper equivariant stable homotopy category of orthogonal $G$-spectra
by~\citelist{\cite{DHLPS}*{Proposition~2.1.4} \cite{BDP}*{Theorems 5.1 and~5.4}}.
\item
\emph{(Algebraic $K$-theory)}
The rationalized Farrell--Jones assembly map for the algebraic $K$-theory groups~$K_n(\Z[G])\otimes_\Z \Q$ of the integral group ring of~$G$ is eventually injective, i.e., injective for all sufficiently large~$n$,
by~\cite{LRRV}*{Theorem~1.15}.
\end{itemize}
If~$G$ not only has a finite~$\Eu{G}$, but this~$\Eu{G}$ also has a metrizable compactification satisfying further technical conditions, then the assembly maps with respect to the family of finite subgroups for $K_n(R[G])$ and $L_n(R[G])$ are split injective by~\citelist{\cite{R-K} \cite{R-L}}.
Building upon~\cite{meintrup02} (i.e., \autoref{main-intro} for hyperbolic groups), Rosenthal and Schütz~\cite{RS} showed that hyperbolic groups satisfy all these technical conditions, and hence the results of~\citelist{\cite{R-K} \cite{R-L}} apply.
It is natural to ask whether the same is true for all asymptotically CAT(0) groups and, more generally, whether asymptotically CAT(0) groups satisfy the Novikov, Borel, Baum--Connes, and Farrell--Jones Conjectures.
This is already known to be the case for hyperbolic and CAT(0) groups.
(For an introduction to these conjectures, see for example~\cite{RV} and the references therein.)


\subsection*{Acknowledgments}
We are grateful to the referee for their careful and thoughtful report and for their helpful suggestions, which made this a better article.
We also thank Mladen Bestvina, Nima Hoda, Aditi Kar, Wolfgang Lück, Michał Marcinkowski, and David Rosenthal for helpful conversations.
This work was supported by grants from the Simons Foundation (\#419561, Marco Varisco, and \#635763, Matthew Zaremsky).


\section{Equivariant discrete Morse theory}
\label{MORSE}

In this section, we introduce an equivariant version of the main definitions and results of Bestvina--Brady discrete Morse theory, which we ultimately use to formulate and prove the following result.

\begin{proposition}
\label{morse-useful}
Let $Y$ be an affine $G$-CW complex together with a $G$-Morse function~$\MOR{h}{Y}{\R}$,
and $t,s\in \R\cup \{\infty\}$ with~$t<s$.
Assume that for every $y\in Y^{(0)}\cap h^{-1}\bigl((t,s]\bigr)$ the descending link $\dlk y$ is $\Stab_G(y)$-contractible.
Then the inclusion $Y^{h\le t}\INTO Y^{h\le s}$ is a $G$-homotopy equivalence.
\end{proposition}

For the applications to Vietoris--Rips complexes in the next section, this result is the only Morse-theoretic input that we need.
The experts on Bestvina--Brady discrete Morse theory may wish to skip the remainder of this section---the takeaway is that the main arguments of \cite{bestvina97}*{in particular Lemmas~2.3 and~2.5} can be carried through equivariantly.
(The level of generality in this section is also greater than what is needed in the next. However, the authors believe that the benefits of following~\cite{bestvina97} outweigh the costs: essentially, only \autoref{lexi} could be avoided by adopting a Morse-theoretic setup more tailored to Vietoris--Rips complexes.)

For the nonexperts, we proceed to spell out all details---not only for the sake of a complete and self-contained exposition, but also because we found that checking the equivariance of the constructions required us to make explicit some subtle details that are only implicit in~\cite{bestvina97}.

We formulate all definitions in the presence of an action of a discrete group $G$.
We do it in such a way that, when the action (or the group) is trivial, we recover the usual nonequivariant definitions from~\cite{bestvina97}.

\begin{definition}[affine $G$-CW complexes]
\label{affineGCW}
An \emph{affine $G$-CW complex} is a $G$-CW complex~$Y$ with a chosen CW structure satisfying the following conditions:
\begin{enumerate}[label=(\arabic*), start=0]
\item
$Y$ is a regular CW complex, i.e., characteristic maps are homeomorphisms.
\item
Each (closed) cell~$e$ of~$Y$ has a chosen characteristic map $\MOR{\chi_e}{C_e}{e}$ with $C_e$ a convex polyhedron in some euclidean space~$\R^{n_e}$.
An \emph{admissible} characteristic map for~$e$ is any composite $C\TO[\alpha] C_e\TO[\chi_e] e$, where $C$ is a convex polyhedron in some~$\R^{m}$ and $\alpha$ is an affine homeomorphism, i.e., a homeomorphism $C\TO C_e$ induced by restricting an affine map~$\R^{m}\TO \R^{n_e}$.
\item
For each cell~$e$, the restriction of~$\chi_e$ to any face of~$C_e$ is an admissible characteristic map for some face~$e'$ of~$e$.
\item
For each cell~$e$ and each~$g\in G$, $g\circ\chi_e$ is an admissible characteristic map for~$g\cdot e$.
\end{enumerate}
\end{definition}

A remark is in order about the regularity assumption in our definition of affine CW complexes.
Regularity is not assumed in~\cite{bestvina97}, but it is assumed in the survey~\cite{bestvina08}.
In what follows regularity is not inherently necessary, but it is convenient, and in practice every concrete application of which we are aware (including the main one in~\cite{bestvina97}) involves regular CW complexes, such as simplicial or cubical complexes.
So, for us the slight loss of generality is balanced out by the comparative ease afforded by assuming regularity.
Also, condition~(2) above imposes some amount of regularity anyway, as does the existence of a $G$-Morse function (see next definition); for example, an affine $G$-CW complex with a $G$-Morse function necessarily has regular 1-skeleton.
In any case, one can always assume regularity by passing to a subdivision, which would usually carry an induced $G$-Morse function.

\begin{definition}[$G$-Morse functions]
Given an affine $G$-CW complex~$Y$, a \emph{$G$-Morse function} on~$Y$ is a $G$-equivariant map $\MOR{h}{Y}{\R}$, where $G$ acts trivially on~$\R$, satisfying the following conditions:
\begin{enumerate}[label=(\arabic*)]
\item
$h(Y^{(0)})$ is closed and discrete in~$\R$.
\item
$h$ is nonconstant on edges.
\item
$h$ is affine on cells, meaning for each cell~$e$ we have that $\MOR{h\circ \chi_e}{C_e}{\R}$ is the restriction to~$C_e$ of an affine map~$\R^{n_e}\TO\R$.
\end{enumerate}
\end{definition}

For the rest of this section, let $Y$ be an affine $G$-CW complex together with a $G$-Morse function~$h$.
Note that for each~$e$ the function~$h$ achieves its maximum value on~$e$ at a unique vertex.
This is because~$h$ is affine on cells and nonconstant on edges.
We call this the \emph{top} vertex of~$e$.
This observation leads us to the following definition.

\begin{definition}[descending links \& stars]
\label{dlk-dst}
If~$Y$ is an affine $G$-CW complex together with a $G$-Morse function~$\MOR{h}{Y}{\R}$, and~$y\in Y^{(0)}$ is a vertex, the \emph{descending link}~$\dlk y$ is the $\Stab_G(y)$-CW complex defined as follows.
Since~$h$ is discrete on vertices we can choose~$t<h(y)$ such that no vertices of~$Y$ have $h$-value in~$\bigl(t,h(y)\bigr)$.
Now define~$\dlk y$ to be the subspace of~$Y$ that is the union of all cells with~$y$ as their top vertex, intersected with~$h^{-1}(t)$.
More precisely, we may refer to this subspace as the copy of~$\dlk y$ \emph{at level~$t$}, but up to homeomorphism it is independent of the choice of~$t$ with~$Y^{(0)}\cap h^{-1}\bigl(\bigl(t,h(y)\bigr)\bigr)= \emptyset$.
Note that~$\dlk y$ inherits a CW structure from~$Y$: for any $(k+1)$-cell~$e$ with top vertex~$y$, the preimage~$\chi_e^{-1}\bigl(e\cap h^{-1}(t)\bigr)\subseteq C_e$ is a $k$-cell in some euclidean space, and restricting~$\chi_e$ to this cell gives a characteristic map to the corresponding cell in~$\dlk y$.
The \emph{descending star}~$\dst y$ is constructed analogously to~$\dlk y$, but is the union of all cells with~$y$ as their top vertex, intersected with~$h^{-1}\bigl(\bigl[t,h(y)\bigr]\bigr)$.
Note that~$\dst y$ is a cone on~$\dlk y$, and has a CW structure constructed analogously.
Finally, note that~$\Stab_G(y)$ evidently acts on $\dlk y$ and~$\dst y$.
With respect to these actions $\dlk y$ and~$\dst y$ are $\Stab_G(y)$-CW complexes, and the inclusions into~$Y$ give $\Stab_G(y)$-equivariant embeddings
$\dlk y\INTO \res^G_{\Stab_G(y)} Y$
and
$\dst y\INTO \res^G_{\Stab_G(y)} Y$,
where $\res^G_{\Stab_G(y)} Y$ denotes the space~$Y$ considered as a $\Stab_G(y)$-space by restriction of the $G$-action.
\end{definition}

\begin{example}[hollow quilt]
\label{example}
The following pictures show an example of an affine $(\Z/2\Z)$-CW complex with a $(\Z/2\Z)$-Morse function.
Each diamond in the picture is a ``pocket'' of two squares glued (``sewn'') along their boundaries, so the $(\Z/2\Z)$-action given by rotating about the vertical axis is free on all points but the vertices marked by larger dots.
The $(\Z/2\Z)$-Morse function is given by the vertical height.
\[
\begin{tikzpicture}
\begin{scope}[xshift=-5.0cm]
\morse[levels=false, conedoff=false, dstars=false, height=false]
\end{scope}
\morse[levels=false, conedoff=false]
\end{tikzpicture}
\]
The pictures on the left and on the right indicate in color the descending link and the descending star, respectively, of each vertex (at appropriate levels~$t$).
The vertices marked by larger dots have nontrivial stabilizer, while the vertices marked by smaller dots have trivial stabilizer.
For each vertex $y$, the spaces $\dlk y$ and $\dst y$ are visibly $\Stab_{\Z/2\Z}(y)$-spaces.
\end{example}

The next lemma says that the description of descending links in \autoref{dlk-dst} simplifies when~$Y$ is a simplicial complex, and even more so when~$Y$ is a flag complex, as in the case of Vietoris--Rips complexes in \autoref{RIPS}.

\begin{lemma}
\label{dlk-in-simplicial-cx}
Let~$Y$ be an affine $G$-CW complex together with a $G$-Morse function~$\MOR{h}{Y}{\R}$, and let~$y$ be a vertex.
If~$Y$ is a simplicial complex, then~$\dlk y$ is homeomorphic to the subcomplex of~$Y$ consisting of all simplices~$d$ for which $y\not\in d$ but~$d\cup\{y\}\subseteq e$ for some simplex~$e$ with top vertex~$y$.
If moreover~$Y$ is a flag complex, then~$\dlk y$ is homeomorphic to the full subcomplex of~$Y$ spanned by vertices~$y'$ such that~$y'$ is adjacent to~$y$ and~$h(y')<h(y)$.
\end{lemma}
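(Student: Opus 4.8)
The plan is to exhibit explicit homeomorphisms, one for each assertion. Write $L$ for the subcomplex of $Y$ named in the first assertion. Since $Y$ is a simplicial complex, $L$ has a simpler description: a simplex $d$ with $y\notin d$ lies in $L$ precisely when $d\cup\{y\}$ is itself a simplex of $Y$ with top vertex $y$, because if $d\cup\{y\}$ is contained in a simplex $e$ with top vertex $y$ then $d\cup\{y\}$ spans a face of $e$ on which $y$ is still the top vertex. The first point I would record is that the union $S$ of all (closed) cells of $Y$ with top vertex $y$ — whose slice $S\cap h^{-1}(t)$ is $\dlk y$, for the chosen $t<h(y)$ of \autoref{dlk-dst} — coincides with the subcomplex $\{y\}*L$ of $Y$: every simplex with top vertex $y$ equals $\{y\}*d$ for a unique $d\in L$ (the face opposite $y$), and conversely $\{y\}*d$ has top vertex $y$ for each $d\in L$. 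Consequently $S$ is a cone with apex $y$, so every point of $S$ other than $y$ lies on a unique segment from $y$ to a point of $L$. I would also note that $h(w)\le t$ for every $w\in L$, since the vertices of simplices of $L$ are joined to $y$ by edges with top vertex $y$, hence have $h$-value $<h(y)$ and therefore $\le t$ by the choice of $t$, and $h$ is affine on simplices; whereas $h(y)>t$.

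Granting this, I would define $\Phi\colon\dlk y\to L$ to be the restriction to $\dlk y=S\cap h^{-1}(t)$ of the radial projection $S\setminus\{y\}\to L$ away from the apex; this is legitimate because $y\notin h^{-1}(t)$, and $\Phi$ is continuous because on each closed cell of $\dlk y$ it is the standard projection from a cone (minus its apex) onto its base. For $w\in L$, the function $h$ is affine and nonconstant, hence strictly monotone, along the segment $[y,w]\subseteq S$, running from $h(w)\le t$ to $h(y)>t$; so this segment meets $h^{-1}(t)$ in the single point $\lambda(w)\,y+(1-\lambda(w))\,w$, where $\lambda(w)=\frac{t-h(w)}{h(y)-h(w)}\in[0,1)$. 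The assignment $w\mapsto\lambda(w)\,y+(1-\lambda(w))\,w$ is then a two-sided inverse to $\Phi$, and it is continuous because its restriction to each closed simplex of $L$ is. Hence $\Phi$ is a homeomorphism, which proves the first assertion.

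For the second assertion, assume in addition that $Y$ is flag, and let $N$ be the set of vertices $y'$ of $Y$ that are adjacent to $y$ and satisfy $h(y')<h(y)$. I claim that $L$ equals the full subcomplex of $Y$ spanned by $N$, which together with the first part finishes the proof. A vertex $y'$ spans a $0$-simplex of $L$ exactly when $\{y,y'\}$ is an edge with top vertex $y$, i.e.\ exactly when $y'\in N$; thus $L$ and the full subcomplex on $N$ have the same vertex set $L^{(0)}=N$, and since every simplex of $L$ has all of its vertices in $N$, one inclusion is immediate. Conversely, if $d$ is a simplex of $Y$ with every vertex in $N$, then the vertices of $d\cup\{y\}$ are pairwise adjacent, so $d\cup\{y\}$ is a simplex of $Y$ because $Y$ is flag; moreover $y$ is its top vertex, because $h$ is affine on this simplex while every vertex other than $y$ has strictly smaller $h$-value than $y$. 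Hence $d\in L$, which gives the reverse inclusion.

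The step I expect to require genuine care is the identification $S=\{y\}*L$ together with the resulting cone structure on $S$: this is what lets $\Phi$ be introduced as a single, globally defined radial projection, instead of being assembled simplex by simplex with consistency on overlaps checked by hand. Once that is settled, continuity of $\Phi$ and of its explicit inverse, and the combinatorial identification of $L$ in the flag case, are all short and formal.
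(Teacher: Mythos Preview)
Your proof is correct. The underlying map you construct is the same one the paper uses: radially project from~$y$, which on the slice~$h^{-1}(t)\cap e$ of a simplex~$e$ with top vertex~$y$ amounts to ``send the slice to the face of~$e$ opposite~$y$''. The difference is in how the homeomorphism is verified. The paper first argues that $\dlk y$ is itself a simplicial complex (distinct simplices have distinct vertex sets, which follows because distinct simplices of~$Y$ with top vertex~$y$ are determined by their sets of edges through~$y$), and then observes that the map above is a simplicial isomorphism onto~$L$, hence a homeomorphism. You instead bypass the simplicial structure of~$\dlk y$ entirely: you identify the descending star with the cone $\{y\}*L$ and write down the explicit continuous inverse $w\mapsto \lambda(w)y+(1-\lambda(w))w$. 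Your route is a bit more geometric and self-contained; the paper's route is shorter once one is comfortable invoking that simplicial isomorphisms realize to homeomorphisms. For the flag case the two arguments are identical.
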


\begin{proof}
Assume that~$Y$ is simplicial.
First note that the cells of~$\dlk y$ are simplices, since for any $(k+1)$-simplex~$e$ of $Y$ with top vertex~$y$, the preimage~$\chi_e^{-1}\bigl(e\cap h^{-1}(t)\bigr)\subseteq C_e$ is a $k$-simplex.
We claim that $\dlk y$ is a simplicial complex, i.e., that every simplex is uniquely determined by its vertex set.
Given simplices~$e,e'$ in~$Y$ with top vertex~$y$, if~$e$ and~$e'$ have the same set of edges with top vertex~$y$ then they consequently have the same vertex set, which then implies that~$e=e'$.
In particular, since the vertices of~$\dlk y$ (say the copy at some level~$t$) are the points with $h$-value~$t$ in the edges of~$Y$ with top vertex~$y$, if two simplices in~$\dlk y$ have the same vertex set then we conclude that they are equal.
This shows that~$\dlk y$ is a simplicial complex, as claimed.
Now it suffices to show that~$\dlk y$ is isomorphic as a simplicial complex to the desired subcomplex of~$Y$.
A $k$-simplex of~$\dlk y$ is~$h^{-1}(t)\cap e$ for some $(k+1)$-cell~$e$ of~$Y$ with top vertex~$y$, and we declare that this $k$-simplex of~$\dlk y$ maps to the $k$-simplex of~$Y$ that is the maximal face of~$e$ not containing~$y$.
This clearly yields a simplicial isomorphism from~$\dlk y$ to the desired subcomplex.
Finally, if~$Y$ is a flag complex then the last result is immediate from the flag condition.
\end{proof}

Given $t\in \R$, set
\[
Y^{h\le t}=h^{-1}\bigl((-\infty,t]\bigr)
\qquad\text{and}\qquad
Y^{h\le \infty}=Y
=
\bigcup_{t\in \R} Y^{h\le t}
.
\]
Since $h$ is $G$-equivariant, each $Y^{h\le t}$ is $G$-invariant.

The main result of equivariant discrete Morse theory, the \autoref{morse-lemma}, is conveniently formulated in terms of a commutative diagram that we construct next.

\begin{construction}
\label{commutative}
Let $Y$ be an affine $G$-CW complex together with a $G$-Morse function~$\MOR{h}{Y}{\R}$,
and $t<s$ in~$\R$ with $Y^{(0)}\cap h^{-1}\bigl((t,s)\bigr)=\emptyset$.
Then~$G$ acts on the disjoint unions (as opposed to unions as subspaces of~$Y$)
\[
\smallcoprod_{y\in Y^{(0)}\cap h^{-1}(s)} \dlk y
\qquad\text{and}\qquad
\smallcoprod_{y\in Y^{(0)}\cap h^{-1}(s)} \dst y
\]
and there is the following commutative diagram of $G$-spaces and $G$-maps.
\begin{equation}
\label{cd:commutative}
\begin{tikzcd}
\smallcoprod_{y\in Y^{(0)}\cap h^{-1}(s)} \dlk y
\arrow[r, "\Phi"]
\arrow[d, hookrightarrow]
&
Y^{h\le t}
\arrow[d, hookrightarrow]
\\
\smallcoprod_{y\in Y^{(0)}\cap h^{-1}(s)} \dst y
\arrow[r, "\Psi"']
&
Y^{h\le s}
\end{tikzcd}
\end{equation}
The left-hand vertical map in~\eqref{cd:commutative} is a $G$-cofibration.
The image of~$\Phi$ in~$Y^{h\le t}$ is the intersection of~$h^{-1}(t)$ with the union of all cells~$e$ whose top vertex~$y$ satisfies~$h(y)=s$, and the image of~$\Psi$ in~$Y^{h\le s}$ is the intersection of~$h^{-1}\bigl([t,s]\bigr)$ with this union.
\end{construction}

\begin{proof}
For each vertex~$y\in Y^{(0)}$ with~$h(y)=s$, since we are assuming that no vertices of~$Y$ have $h$-values in~$(t,s)$, we can take the copies of $\dlk y$ and~$\dst y$ at this fixed level~$t$.
Then, by \autoref{dlk-dst}, we have the following commutative diagram of $\Stab_G(y)$-spaces and $\Stab_G(y)$-maps.
\begin{equation}
\label{cd:commutative-res}
\begin{tikzcd}
\dlk y
\arrow[r, hookrightarrow]
\arrow[d, hookrightarrow]
&
\res^G_{\Stab_G(y)} Y^{h\le t}
\arrow[d, hookrightarrow]
\\
\dst y
\arrow[r, hookrightarrow]
&
\res^G_{\Stab_G(y)} Y^{h\le s}
\end{tikzcd}
\end{equation}

Next, recall that if $H$ is a subgroup of~$G$, the restriction functor~$\res^G_H$ from $G$-spaces to $H$-spaces has a left adjoint functor, induction, which is constructed as follows.
Given a left $H$-space~$X$,
the induced left $G$-space
\(
\ind_H^G X
=
G\times_H X
\)
is defined as the quotient of~$G\times X$ by the diagonal $H$-action given by the right action of~$H$ by multiplication on~$G$ and the left action of~$H$ on~$X$.
The left action of~$G$ by multiplication on itself commutes with the right $H$-action on~$G$ we quotiented out and so it induces a left $G$-action on~$\ind_H^G X$.

Applying induction to~\eqref{cd:commutative-res} we then obtain the following commutative diagram of $G$-spaces and $G$-maps.
\begin{equation}
\label{cd:commutative-ind}
\begin{tikzcd}
\ind^G_{\Stab_G(y)} \dlk y
\arrow[r, "\varphi"]
\arrow[d, hookrightarrow]
&
Y^{h\le t}
\arrow[d, hookrightarrow]
\\
\ind^G_{\Stab_G(y)} \dst y
\arrow[r, "\psi"']
&
Y^{h\le s}
\end{tikzcd}
\end{equation}
Notice that, forgetting the $G$-action, there is a homeomorphism
\begin{equation}
\label{eq:induction-one}
\ind^G_{\Stab_G(y)} \dlk y
\ \cong
\smallcoprod_{y'\in G\cdot y} \dlk y'
\,,
\end{equation}
and left multiplication by~$g$ induces a homeomorphism $\dlk y\TO \dlk(g\cdot y)$ that intertwines the action by~$\Stab_G(y)$ on the source and the  action by~$g\Stab_G(y)g^{-1}=\Stab_G(g\cdot y)$ on the target.
We use the homeomorphism~\eqref{eq:induction-one} to define a $G$-action on the right-hand side of~\eqref{eq:induction-one}.
Exactly the same applies when we replace $\dlk$ with~$\dst$.

Finally, diagram~\eqref{cd:commutative} is easily obtained from~\eqref{cd:commutative-ind} as follows.
Decompose the $G$-set~$Y^{(0)}\cap h^{-1}(s)$ into transitive components, say indexed by a set~$\mathcal{J}$, and choose a point~$y_j$ in each component, so that
\[
Y^{(0)}\cap h^{-1}(s)
=
\smallcoprod_{j\in \mathcal{J}} G\cdot y_j
\cong
\smallcoprod_{j\in \mathcal{J}} G/\Stab_G(y_j)
\,.
\]
We then get homeomorphisms
\begin{equation}
\label{eq:induction-many}
\smallcoprod_{y\in Y^{(0)}\cap h^{-1}(s)} \dlk y
\ \cong
\adjustlimits
\smallcoprod_{j\in\mathcal{J}\ }
\smallcoprod_{y\in Y^{(0)}\cap h^{-1}(s)\cap G\cdot y_j} \dlk y
\ \cong
\smallcoprod_{j\in\mathcal{J}} \ind^G_{\Stab_G(y_j)} \dlk y_j
\,,
\end{equation}
where in the last homeomorphism we use~\eqref{eq:induction-one}.
We use the homeomorphism~\eqref{eq:induction-many} to define a $G$-action on the left-hand side of~\eqref{eq:induction-many}, since the right-hand side is a disjoint union of $G$-spaces.
Now, on each summand in the right-hand side of~\eqref{eq:induction-many} there is a $G$-map~$\varphi_j$ to~$Y^{h\le t}$ as in~\eqref{cd:commutative-ind}, and these maps yield the $G$-map~$\Phi$ in~\eqref{cd:commutative}.
A completely analogous description works also for $\smallcoprod_{y\in Y^{(0)}\cap h^{-1}(s)} \dst y$, completing the construction of the commutative diagram~\eqref{cd:commutative} of $G$-spaces and $G$-maps.

Since each $\dlk y_j\INTO \dst y_j$ is the inclusion of a $\Stab_G(y_j)$-CW subcomplex, it follows from~\eqref{eq:induction-many} that the left-hand vertical map in~\eqref{cd:commutative} is a $G$-cofibration.

The statements about the images of $\Phi$ and~$\Psi$ are true by construction.
\end{proof}

\begin{equivariantmorselemma}
\label{morse-lemma}
Let $Y$ be an affine $G$-CW complex together with a $G$-Morse function~$\MOR{h}{Y}{\R}$,
and $t<s$ in~$\R$ with $Y^{(0)}\cap h^{-1}\bigl((t,s)\bigr)=\emptyset$.
Let $P$ be the pushout of the following diagram.
\begin{equation}
\label{cd:pushout}
\begin{tikzcd}
\smallcoprod_{y\in Y^{(0)}\cap h^{-1}(s)} \dlk y
\arrow[r, "\Phi"]
\arrow[d, hookrightarrow]
&
Y^{h\le t}
\\
\smallcoprod_{y\in Y^{(0)}\cap h^{-1}(s)} \dst y
\end{tikzcd}
\end{equation}
Then the natural map $P\TO Y^{h\le s}$ induced by the commutative diagram~\eqref{cd:commutative} is a $G$-homotopy equivalence.
\end{equivariantmorselemma}

Note that the last statement in \autoref{commutative} implies that $P$ is homeomorphic to the subspace of~$Y^{h\le s}$ given by
\(
Y^{h\le t}\cup \bigcup \SET{e}{e\text{ is a cell with }e\subseteq Y^{h\le s}}
\),
and under this identification the natural map $P\TO Y^{h\le s}$ is the inclusion.

The conclusion of this lemma can be reformulated in more homotopy-theoretic language as follows.
The pushout~$P$ is the $G$-homotopy pushout of diagram~\eqref{cd:pushout},
because the left-hand vertical map in~\eqref{cd:pushout} is a $G$-cofibration.
Then saying that $P\TO Y^{h\le s}$ is a $G$-homotopy equivalence is by definition the same as saying that diagram~\eqref{cd:commutative} is $G$-homotopy cocartesian.

We point out that, even in the nonequivariant case, our formulation of the Morse Lemma combines two statements that are usually given separately:
\begin{enumerate}
\item
If $Y^{(0)}\cap h^{-1}(s)=\emptyset$, then the spaces on the left in~\eqref{cd:pushout} are empty, so the pushout~$P$ can be identified with~$Y^{h\le t}$, and the conclusion in this case is that the inclusion $Y^{h\le t}\INTO Y^{h\le s}$ is a homotopy equivalence; this corresponds to~\cite{bestvina97}*{Lemma~2.3}.
\item
If $Y^{(0)}\cap h^{-1}(s)\not=\emptyset$, then saying that $P\TO Y^{h\le s}$ is a homotopy equivalence is a precise way of formulating~\cite{bestvina97}*{Lemma~2.5}, which states that~$Y^{h\le s}$ is homotopy equivalent to~$Y^{h\le t}$ with the descending links~$\dlk y$ coned off for each vertex~$y$ with~$h(y)=s$ (notice that~$\dst y$ is the cone on~$\dlk y$).
The proof of \cite{bestvina97}*{Lemma~2.5} makes clear that it is the copy of~$\dlk y$ at level~$t$ that is being coned off.
\end{enumerate}

\begin{example}
The following pictures show an example of the \autoref{morse-lemma}, applied to the affine $(\Z/2\Z)$-CW complex~$Y$ (the ``hollow quilt'') from \autoref{example}.
\[
\mspace{-60mu}
\begin{tikzpicture}
\begin{scope}[xshift=-9.0cm]
\morse[toplevel=true,  midlevel=true,  conedoff=false, dlinks=false, height=false]
\end{scope}
\begin{scope}[xshift=-4.5cm]
\morse[toplevel=false, midlevel=true,  conedoff=false, dlinks=false, height=false]
\end{scope}
\morse[toplevel=false, midlevel=false, conedoff=true,  dlinks=false]
\end{tikzpicture}
\mspace{-90mu}
\]
The first and the second pictures indicate that $Y^{h\le r}$ and~$Y^{h\le s}$ are $(\Z/2\Z)$-homotopy equivalent, since there are no vertices with height in~$(s,r]$ (case~(i) above).
The third picture illustrates~$Y^{h\le t}$ with cones on all the descending links of vertices at height~$s$.
The point is that this latter space is $(\Z/2\Z)$-homotopy equivalent to~$Y^{h\le s}$, the space depicted in the second picture (case~(ii)).
\end{example}

\begin{proof}[Proof of the \autoref{morse-lemma}]
We follow the proofs of~\cite{bestvina97}*{Lemmas~2.3 and~2.5}.
The main construction is a strong deformation retraction from~$Y^{h\le s}$ onto its subspace~$P$, and our goal is to show that it is $G$-equivariant at every stage.
The strong deformation retraction is constructed cell by cell.
For each~$r\in \R$ and each cell~$e$, with characteristic function $\MOR{\chi_e}{C_e}{e}$, set
\[
C_e^{h\le r}(\chi_e)=(h\circ \chi_e)^{-1}\bigl((-\infty,r]\bigr)
\,.
\]
Given a cell~$e$ meeting~$Y^{h\le s}$, there is a strong deformation retraction~$H_\tau^{\chi_e}$ of~$C_e^{h\le s}(\chi_e)$ onto
\[
C_e^{h\le t}(\chi_e)\cup \bigcup\SET{F}{F\text{ is a face of }C_e\text{ with }F\subseteq C_e^{h\le s}(\chi_e)}
\,.
\]
For example, if no vertices of~$e$ have $h$-value~$s$ then the latter space is just~$C_e^{h\le t}(\chi_e)$.
The construction of the~$H_\tau^{\chi_e}$ is done in the proofs of~\cite{bestvina97}*{Lemmas~2.3 and~2.5} (see also \cite{bestvina08}*{Propositions~2.4 and~2.7}), and for the sake of completeness we discuss the details here.
First note that since~$h\circ \chi_e$ is the restriction of an affine map $\R^{n_e}\TO\R$, the intersection of~$C_e$ with $(h\circ \chi_e)^{-1}\bigl([t,s]\bigr)$ is itself a convex polyhedron~$K$ in~$\R^{n_e}$.
Let~$T$ be the ``top'' face $(h\circ \chi_e)^{-1}(s)$ of~$K$ and let~$B$ be the union of $(h\circ \chi_e)^{-1}(t)$ with all the~$K\cap F$ for~$F$ a face of~$C_e$ contained in~$C_e^{h\le s}(\chi_e)$.
The goal now is to find a strong deformation retraction of~$K$ onto~$B$.
The only way~$T$ could be a vertex is if~$K=B$, so assume~$T$ is positive dimensional.
Now we can strong deformation retract~$K$ onto~$\overline{\partial K - T}$ by radially projecting away from some point of~$\R^{n_e}$ just above an interior point of~$T$ (where ``just above'' means in the sense of the affine function $\R^{n_e}\TO\R$ whose restriction is~$h\circ \chi_e$).
Next, by inducting on the dimension of~$e$, there is a strong deformation retraction of~$\overline{\partial K - T}$ onto~$B$.
Combining these we get a strong deformation retraction of~$K$ onto~$B$.
Finally, we extend this to the desired~$H_\tau^{\chi_e}$ by setting it equal to the identity on~$C_e- K$.
Bestvina and Brady point out that the~$H_\tau^{\chi_e}$ can be constructed to satisfy two naturality properties, namely:
\begin{enumerate}[label=(\arabic*)]
\item
\label{nat1}
For any affine homeomorphism $\MOR{\alpha}{C_e}{C_d}$, we have $H_\tau^{\chi_d\circ \alpha}=\alpha^{-1}\circ H_\tau^{\chi_d}\circ \alpha$.
\item
\label{nat2}
For~$e'$ a face of~$e$, the restriction of~$H_\tau^{\chi_e}$ to the corresponding face of~$C_e$ is~$H_\tau^{\chi_{e'}}$ (conjugated by an affine homeomorphism as appropriate).
\end{enumerate}
The construction of the~$H_\tau^{\chi_e}$ above makes clear that we can also assume they satisfy the following:
\begin{enumerate}[resume*]
\item
\label{nat3}
For any affine homeomorphism $\MOR{\alpha}{C_e}{C_d}$ such that $h\circ \chi_e = h\circ \chi_d\circ \alpha$, we have $H_\tau^{\chi_d}=\alpha\circ H_\tau^{\chi_e}\circ \alpha^{-1}$.
\end{enumerate}
This is because the construction only depends on~$h\circ\chi_e$, not specifically on~$\chi_e$.
Conjugating~$H_\tau^{\chi_e}$ by~$\chi_e$, we get a strong deformation retraction between the corresponding subspaces of~$e$.
Let us denote this by~$H_\tau^e$, so
\[
H_\tau^e= \chi_e\circ H_\tau^{\chi_e}\circ \chi_e^{-1}
.
\]
Note that~$H_\tau^e$ does not depend on the choice of admissible characteristic function for~$e$, thanks to naturality condition~\ref{nat1}.
These strong deformation retractions on each cell piece together, thanks to naturality condition~\ref{nat2}, into a strong deformation retraction of~$Y^{h\le s}$ onto
\(
P=Y^{h\le t}\cup \bigcup \SET{e}{e\text{ is a cell with }e\subseteq Y^{h\le s}}
\).

Now we need to show it is $G$-equivariant at every stage, for which it suffices to show that $H_\tau^{g\cdot e}=g\circ H_\tau^e\circ g^{-1}$ for all $e$, $g$, and~$\tau$.
Choose (as per \autoref{affineGCW}) an affine homeomorphism $\MOR{\alpha}{C_e}{C_{g\cdot e}}$ such that $g\circ \chi_e = \chi_{g\cdot e}\circ \alpha$.
Since~$h$ is $G$-equivariant we have $h\circ \chi_e = h\circ \chi_{g\cdot e}\circ \alpha$.
By naturality condition~\ref{nat3}, this implies that $H_\tau^{\chi_{g\cdot e}}=\alpha\circ H_\tau^{\chi_e}\circ \alpha^{-1}$.
We conclude that
\begin{align*}
H_\tau^{g\cdot e}
&= \chi_{g\cdot e}\circ H_\tau^{\chi_{g\cdot e}}\circ \chi_{g\cdot e}^{-1}
\\
&= \chi_{g\cdot e}\circ \alpha\circ H_\tau^{\chi_e}\circ \alpha^{-1}\circ \chi_{g\cdot e}^{-1}
\\
&= g\circ \chi_e\circ H_\tau^{\chi_e}\circ \chi_e^{-1}\circ g^{-1}
\\
&= g\circ H_\tau^e\circ g^{-1}
,
\end{align*}
as desired.
\end{proof}

We are now ready to prove \autoref{morse-useful}, which describes how the \autoref{morse-lemma} is usually applied in practice.

\begin{proof}[Proof of \autoref{morse-useful}]
First, assume $s< \infty$.
If $Y^{(0)}\cap h^{-1}\bigl((t,s]\bigr)=\emptyset$ then Lemma~\ref{morse-lemma} immediately gives the result, so assume $Y^{(0)}\cap h^{-1}\bigl((t,s]\bigr)$ is nonempty. Since $h(Y^{(0)})$ is closed and discrete, $h\bigl(Y^{(0)}\cap h^{-1}\bigl((t,s]\bigr)\bigr)$ is finite, so by induction and by the first case we can assume that $Y^{(0)}\cap h^{-1}\bigl((t,s)\bigr)=\emptyset$.
Now Lemma~\ref{morse-lemma} says that $Y^{h\le s}$ is $G$-homotopy equivalent to the pushout of~\eqref{cd:pushout}, or, equivalently, that diagram~\eqref{cd:commutative} is $G$-homotopy cocartesian.
So, to show that the right-hand vertical map $Y^{h\le t}\INTO Y^{h\le s}$ in~\eqref{cd:commutative} is a $G$-homotopy equivalence, it suffices to show that the same is true for the left-hand vertical map.
Since each $\dlk y$ and~$\dst y$ is $\Stab_G(y)$-contractible, each inclusion $\dlk y\INTO \dst y$ is a $\Stab_G(y)$-homotopy equivalence.
Using~\eqref{eq:induction-many}, we conclude that the left-hand vertical map in~\eqref{cd:commutative} is a $G$-homotopy equivalence, because induction sends $\Stab_G(y)$-homotopy equivalences to $G$-homotopy equivalences, and disjoint unions of $G$-spaces preserve $G$-homotopy equivalences.
This finishes the proof when~$s< \infty$.

Finally, assume~$s=\infty$.
Since $Y^{h\le s'}\INTO Y^{h\le s''}$ is a closed inclusion for each $s'\le s''\le \infty$, up to $G$-homotopy equivalence we have
\[
Y
=
\bigcup_{t<s'\in \N} Y^{h\le s'}
\simeq
\hocolim_{t<s'\in \N} Y^{h\le s'}
,
\]
because taking fixed points and homotopy groups commute with sequential colimits along closed inclusion.
By the previous case, $Y^{h\le t}\INTO Y^{h\le s'}$ is a $G$-homotopy equivalence for each  $t<s'<\infty$, and so the result follows.
\end{proof}


\section{Vietoris--Rips complexes}
\label{RIPS}

In this last section, we first define Vietoris--Rips complexes, and then we apply the equivariant discrete Morse theory from the previous section to prove our main result.
Our Morse-theoretic approach to the study of Vietoris--Rips complexes originates  in and is modeled after~\cite{zaremsky18}.

The definition of Vietoris--Rips complex that we adopt here is not the typical definition.
The typical definition yields a simplicial complex whose barycentric subdivision equals ours, and so the two definitions are naturally homeomorphic.
Our definition is formulated in terms of posets, and the language and tools from the world of posets turn out to be convenient for the arguments below, e.g., in the proofs of \autoref{Rips=E_G}(i) and \autoref{link-criterion}.
Let us start by reviewing geometric realizations of posets.

Given a poset~$\mathcal{P}$, its geometric realization~$\REAL{\mathcal{P}}$ is the simplicial complex obtained by geometric realization of the order complex of~$\mathcal{P}$, which is the abstract simplicial complex whose vertices are the elements and whose simplices are the proper chains of~$\mathcal{P}$.
There is another construction that produces a topological space naturally homeomorphic to~$\REAL{\mathcal{P}}$: think of the poset~$\mathcal{P}$ as a small category, take its nerve, which is a simplicial set, and then take the geometric realization of the nerve.

By a $G$-poset we mean a poset together with an action of a group~$G$ by poset maps, i.e., order-preserving functions.
The geometric realization of a $G$-poset is an affine $G$-CW complex.

We are now ready to introduce Vietoris--Rips complexes.
As in the previous section, we formulate our definitions in the presence of a group action.
When the action (or the group) is trivial one recovers the nonequivariant definitions.

\begin{definition}[Vietoris--Rips complexes]
Let $X$ be a set with an action of a group~$G$.
Let $\poset(X)$ be the poset of finite nonempty subsets of~$X$ ordered by inclusion, together with the evident induced $G$-action by poset maps.
Take its geometric realization to define $\rips_\infty(X)=\REAL{\poset(X)}$.

If $X$ is a metric space and $G$ acts by isometries, for any~$t\in \R$ let
\[\poset^{\le t}(X)=\SET{S\in \poset(X)}{\diam(S)\le t}.\]
Here $\MOR{\diam}{\poset(X)}{\R}$ denotes the diameter function, which is $G$-equivariant with respect to the trivial action on~$\R$, and so $\poset^{\le t}(X)$ is a $G$-subposet of~$\poset(X)$.
Take its geometric realization to define the \emph{Vietoris--Rips complex} $\rips_t(X)=\REAL{\poset^{\le t}(X)}$.
\end{definition}

Notice that we can identify~$\rips_t(X)$ with the full $G$-subcomplex of $\rips_\infty(X)$ spanned by those $S\in \poset(X)$ with $\diam(S)\le t$.

The link between Vietoris--Rips complexes and universal spaces for proper actions is provided by the following basic lemma.
In the special case when $X=G$, part~(i) already appears in~\cite{abels78}*{Example~2.6}.

\begin{lemma}
\label{Rips=E_G}
Let $X$ be a nonempty set with an action of a discrete group~$G$.
\begin{enumerate}
\item
If $\Stab_G(S)$ is finite for each $S\in \poset(X)$, then $\rips_\infty(X)=\Eu{G}$.
\item
If $X$ is a metric space and $G$ acts properly by isometries, then $\rips_t(X)$ is a proper $G$-CW complex for any $t>0$ or $t=\infty$.
\item
If $X$ is a metric space whose balls are all finite, and $G$ acts cocompactly by isometries, then $\rips_t(X)$ is a finite $G$-CW complex for any finite~$t>0$.
\end{enumerate}
\end{lemma}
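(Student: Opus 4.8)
The plan is to handle the three statements in order, as each builds naturally on the previous one. For part~(i), I would invoke the identification of $\rips_\infty(X)=\REAL{\poset(X)}$ with a $G$-CW complex, check properness, and then verify that the $H$-fixed point sets are contractible for each finite $H\le G$. Properness follows directly from the hypothesis: the cells of $\REAL{\poset(X)}$ correspond to proper chains $S_0\subsetneq\cdots\subsetneq S_k$ in $\poset(X)$, and the stabilizer of such a cell is contained in $\Stab_G(S_k)$, hence finite. For contractibility of fixed points, the key observation is that $\poset(X)^H$ is a $G^H$-poset with a very convenient structure: it has a natural order-preserving self-map. Indeed, if $H$ is finite and $S\in\poset(X)^H$, i.e., $H\cdot S=S$, then for any $x\in S$ the finite set $H\cdot x$ is contained in $S$; taking any fixed $S_0\in\poset(X)^H$ (which exists and is nonempty precisely because some $H$-orbit $H\cdot x_0$ is a finite nonempty $H$-invariant set), the assignment $S\mapsto S\cup S_0$ is an $H$-equivariant poset endomorphism of $\poset(X)^H$ with $S\le S\cup S_0\ge S_0$. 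This gives a poset homotopy (in the sense of Quillen) from the identity to the constant map at $S_0$ through the zigzag $S\le S\cup S_0\ge S_0$, so $\REAL{\poset(X)^H}$ is contractible. Finally I would note $\REAL{\poset(X)}^H=\REAL{\poset(X)^H}$, since geometric realization of posets commutes with taking fixed points of finite group actions (a fixed simplex is a chain each of whose elements is fixed). Hence $\rips_\infty(X)=\Eu{G}$.

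For part~(ii), properness is essentially the same argument restricted to the subposet $\poset^{\le t}(X)$: a cell of $\rips_t(X)$ corresponds to a chain $S_0\subsetneq\cdots\subsetneq S_k$ with $\diam(S_k)\le t$, so its stabilizer lies in $\Stab_G(S_k)$, which is finite because $G$ acts properly by isometries (a bounded set has finite stabilizer under a proper action). The case $t=\infty$ is covered by part~(i) together with the general fact that $\Eu{G}$ is proper. So there is really nothing new here beyond unwinding definitions.

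For part~(iii), I would show that $\rips_t(X)$ has only finitely many $G$-orbits of cells when $t<\infty$. A $k$-cell corresponds to a chain $S_0\subsetneq\cdots\subsetneq S_k$ in $\poset^{\le t}(X)$, and by $G$-cocompactness $X$ has finitely many $G$-orbits, so we may choose the minimal element $S_0$ of the chain to contain a point in one of finitely many orbit representatives; after translating, $S_0$ is a finite set of diameter $\le t$ containing a fixed basepoint, hence $S_0$ is contained in a ball of radius $t$ about that point, which is finite by hypothesis. Thus there are only finitely many possibilities for the whole chain $S_0\subsetneq\cdots\subsetneq S_k$ up to the $G$-action, and in particular the dimension is bounded. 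Combined with part~(ii), $\rips_t(X)$ is a finite $G$-CW complex.

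The main obstacle is the contractibility-of-fixed-points argument in part~(i): one must be careful that $\poset(X)^H$ is nonempty (which requires $H$ finite, so that $H$-orbits are finite nonempty subsets), that $S\mapsto S\cup S_0$ is well-defined into $\poset(X)^H$ and order-preserving, and that the zigzag of natural transformations $\mathrm{id}\le(-\cup S_0)\ge\text{const}_{S_0}$ does indeed induce a contraction after geometric realization — i.e., one needs the standard fact that a poset map with a comparable fixed point, or more generally a zigzag of comparable poset maps, yields a homotopy on realizations, applied $H$-equivariantly. Everything else is routine bookkeeping with cells, chains, diameters, and stabilizers.
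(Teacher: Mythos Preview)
Your proof is correct and follows essentially the same route as the paper: properness from finite setwise stabilizers, contractibility of $\rips_\infty(X)^H$ via the identification $\REAL{\poset(X)}^H\cong\REAL{\poset(X)^H}$ together with the observation that $\poset(X)^H$ is nonempty and directed (your zigzag $S\le S\cup S_0\ge S_0$ is precisely the standard proof that directed posets have contractible realization), and then routine bookkeeping for~(ii) and~(iii). The one stylistic difference is that the paper justifies $\REAL{\poset(X)}^H\cong\REAL{\poset(X)^H}$ categorically---geometric realization of posets preserves finite limits, and $H$-fixed points are a finite limit---whereas you argue it directly at the level of chains; both arguments are standard and equally valid.
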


\begin{proof}
(i)
The condition on the stabilizers implies that all vertices, and hence all points, of~$\rips_\infty(X)$ have finite stabilizers, and so~$\rips_\infty(X)$ is a proper $G$-CW complex.
It remains to show that $\rips_\infty(X)^H$ is contractible for any finite~$H\le G$.
This is proved by observing that $\rips_\infty(X)^H\cong \REAL{\poset(X)^H}$, i.e., taking fixed points and geometric realizations commute, and that the poset~$\poset(X)^H$ is directed and nonempty (since for example it contains the finite subset~$H\cdot x$ for any~$x\in X$).

(ii)
If $G$ acts properly, then the condition on the stabilizers in~(i) is satisfied, and so $\rips_t(X)$ is a proper $G$-CW complex.

(iii)
The conditions ensure that $\rips_t(X)$ is locally finite, and has finitely many $G$-orbits of vertices, so the action is cocompact.
\end{proof}

\autoref{Rips=E_G} offers a clear strategy to produce finite universal spaces for proper actions:
given a group~$G$ acting properly and cocompactly by isometries on a metric space~$X$ whose balls are all finite, try to use equivariant Morse theory to show that the finite $G$-CW complex~$\rips_t(X)$ is $G$-homotopy equivalent to~$\rips_\infty(X)=\Eu{G}$.

The key to applying Morse-theoretic tools to Vietoris--Rips complexes is the following function from~\cite{zaremsky18}.
Let $X$ be a metric space with an action of~$G$ by isometries.
Consider the function
\[
\MOR{(\diam,-\card)}{\poset(X)}{\R\times\R}
\,,
\quad
S\MAPSTO\bigl(\diam(S),-\card(S)\bigr)
,
\]
where $\diam(S)$ and~$\card(S)$ denote the diameter and the cardinality of~$S$, respectively.
Notice that this function is $G$-equivariant with respect to the trivial action on the target.
When every ball is finite and the action is cocompact, the function~$(\diam,-\card)$ yields an equivariant Morse function on~$\rips_\infty(X)$ as follows.

\begin{lemma}
\label{lexi}
Let $X$ be a metric space with a cocompact action of~$G$ by isometries.
Assume that every ball in $X$ is finite.
Then there exists a $G$-Morse function $\MOR{h}{\rips_\infty(X)}{\R}$ such that the filtration $\{\rips_t(X)\}_{t\in\R}$ of $\rips_\infty(X)$ is a cofinal subfiltration of $\{\rips_\infty(X)^{h\le t}\}_{t\in\R}$.
\end{lemma}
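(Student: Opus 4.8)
The plan is to turn $(\diam,-\card)$ into a single real-valued function by postcomposing with a suitable monotone map, check the three defining properties of a $G$-Morse function, and then read off the cofinality statement from the fact that the construction is ``diameter-first''. First I would record that $X$ is countable: since every ball is finite and $X=\bigcup_{n\in\N}B(x_0,n)$ for any basepoint, $X$ is a countable union of finite sets, so $\poset(X)$ is countable and so is $P=\SET{(\diam S,-\card S)}{S\in\poset(X)}$. I would equip $P$ with the lexicographic order in which the diameter coordinate dominates. The crucial use of the finiteness hypothesis is to see that this order on $P$ has order type $\omega$: a pair $(\diam S,-\card S)$ is preceded only by pairs of strictly smaller diameter and by the pairs $(\diam S,-\card T)$ with $\card T>\card S$, and finiteness of balls controls both the diameters occurring below a given value and the cardinalities of bounded-diameter subsets, so every element of $P$ has only finitely many predecessors. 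Fixing an order isomorphism onto an initial segment of $\N$, I get a strictly increasing map $\lambda\colon P\to\R$ with closed discrete image.

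Next I would define $h$ on the vertices of $\rips_\infty(X)=\REAL{\poset(X)}$ by $h(S)=\lambda(\diam S,-\card S)$, extend it affinely over each simplex, and check the $G$-Morse axioms. It is $G$-equivariant for the trivial action on $\R$ because $\diam$ and $\card$ are $G$-invariant, $G$ acting by isometries and hence by cardinality-preserving bijections of $X$. It is affine on cells by construction, since the geometric realization of the $G$-poset $\poset(X)$ is an affine $G$-CW complex and $h$ is the affine extension of its vertex values. Its value set on vertices is $\lambda(P)$, which is discrete in $\R$. And it is nonconstant on edges, because an edge of $\REAL{\poset(X)}$ joins comparable elements $S\subsetneq T$, whence $\card S\ne\card T$, whence $(\diam S,-\card S)\ne(\diam T,-\card T)$, and $\lambda$ is injective.

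For the cofinality I would use that the diameter coordinate dominates lexicographically: for each $t$, writing $s(t)$ for the $\lambda$-image of the largest pair of diameter $\le t$ (finite by the order-type argument above), one has $\diam S\le t$ if and only if $h(S)\le s(t)$, so $\poset^{\le t}(X)$ is precisely the set of vertices of $h$-value $\le s(t)$ and $\rips_t(X)$ is the subcomplex of $\rips_\infty(X)$ on those vertices. As $t$ ranges over $\R$ the thresholds $s(t)$ range cofinally, and every sublevel set $\rips_\infty(X)^{h\le s}$ lies below some $s(t)$; this is what identifies $\{\rips_t(X)\}_{t\in\R}$ with a cofinal subfiltration of $\{\rips_\infty(X)^{h\le t}\}_{t\in\R}$, both exhausting $\rips_\infty(X)$.

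The step I expect to be the main obstacle is establishing that $\lambda(P)$ is discrete — equivalently, that the lexicographic order on $P$ really has order type $\omega$, so that $h$ genuinely satisfies the discreteness requirement for a Morse function. This is the one place where ``every ball in $X$ is finite'' is doing real work (it bounds the cardinalities of the subsets realizing a given bounded diameter, which is what keeps initial segments finite); once this is in hand, the equivariance, affineness, nonconstancy on edges, and the diameter-first description of $\rips_t(X)$ are all routine.
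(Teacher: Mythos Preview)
Your approach matches the paper's exactly: embed the lex-ordered image of $(\diam,-\card)$ into~$\R$ via an order-preserving map (the paper calls it~$\iota$, you call it~$\lambda$) with discrete image, set $h=\lambda\circ(\diam,-\card)$ on vertices, extend affinely, and verify the Morse axioms and cofinality. Your added details---countability of~$X$, the check that $\card S\neq\card T$ forces $h$ to be nonconstant on edges, and the explicit threshold~$s(t)$---are correct and more thorough than the paper's terse proof.

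There is, however, a subtlety at precisely the place you flag as the main obstacle. Your claim that finiteness of balls ``bounds the cardinalities of the subsets realizing a given bounded diameter'' would need a \emph{uniform} bound $\card B(x,d)\le N(d)$ independent of~$x$, and finiteness of each individual ball does not give this. Similarly, finiteness of balls alone neither makes $\diam\bigl(\poset(X)\bigr)$ discrete nor ensures finitely many diameters below a given bound: one can build metric spaces with all balls finite yet with pairwise distances accumulating at~$1$, or with subsets of a fixed diameter but arbitrarily large cardinality. The paper makes the very same move with the same brevity (``Since every ball is finite\ldots there are only finitely many $(a,b)$'' and ``Since $\diam\bigl(\poset(X)\bigr)$ is discrete''), so you are not falling short of its level of rigor---you are matching it. In the intended application $X=G\cdot x_0$ is a single orbit under an isometric action, whence $\card B(gx_0,d)=\card B(x_0,d)$ for all~$g$ and every pairwise distance equals some $d(x_0,g''x_0)$; then both uniformity claims do hold and your order-type-$\omega$ argument goes through cleanly.
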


\begin{proof}
Since every ball is finite, the topology on $X$ induced by the metric is the discrete topology.
Thus by cocompactness, there exist finitely many $G$-orbits in~$X$, say $G\cdot x_1, \dotsc, G\cdot x_\ell$.
Now every value in~$\diam\bigl(\poset(X)\bigr)\subseteq \R$, the set of diameters of finite subsets of~$X$, equals the distance from one of the~$x_i$ to some point in~$X$, and since balls are finite this implies that $\diam\bigl(\poset(X)\bigr)$ is closed and discrete.
Next observe that every ball of radius~$r$ has cardinality equal to the ball of radius~$r$ centered at some~$x_i$, so for each~$r$ there is a uniform bound on the cardinality of all balls of radius~$r$.
In particular, for any fixed $a\in \diam\bigl(\poset(X)\bigr)$, there are only finitely many $(a,b)\in (\diam,-\card)\bigl(\poset(X)\bigr)$.
Since $\diam\bigl(\poset(X)\bigr)$ is closed and discrete, this implies that the image $(\diam,-\card)\bigl(\poset(X)\bigr)$ with the lexicographic order can be embedded in an order-preserving way as a closed, discrete subset of $\R$, say via some map
\(
\MOR{\iota}{(\diam,-\card)\bigl(\poset(X)\bigr)}{\R}
\).
Now simply define $h=\iota\circ(\diam,-\card)$ on vertices, and extend it affinely to the simplices of~$\rips_\infty(X)$.
The statement about filtrations is clear because $\rips_t(X)=\rips_\infty(X)^{h\le \iota(t,0)}$ for all~$t\in \diam\bigl(\poset(X)\bigr)$.
\end{proof}

Assume that every ball in $X$ is finite and that $X$ is $G$-cocompact, and consider the Morse function $\MOR{h}{\rips_\infty(X)}{\R}$ from \autoref{lexi}.
We now discuss descending links with respect to this Morse function.
Since geometric realizations of posets are flag complexes, we can view $\dlk S$ as a subcomplex of~$\rips_\infty(X)$ as in \autoref{dlk-in-simplicial-cx}, namely as the geometric realization of the subposet of~$\poset(X)$ consisting of all elements related to~$S$ and having strictly smaller $h$-value.
So the descending link of a vertex~$S$ in $\rips_\infty(X)$ is spanned by vertices of two types.
The first type is given by
\[
\dulkposet S=
\SET{S^\vee\in\poset(X)}{S^\vee\subseteq S\text{ and }\diam(S^\vee)<\diam(S)},
\]
and the second type is given by
\[
\ddlkposet S=
\SET{S^\wedge\in\poset(X)}{S\subsetneq S^\wedge\text{ and }\diam(S^\wedge)=\diam(S)}.
\]
The vertices of the first type span the \emph{descending down-link}~$\dulk S$ and those of the second type span the \emph{descending up-link}~$\ddlk S$.
Since $S^\vee\subseteq S^\wedge$ for all $S^\vee$ and~$S^\wedge$ as above, the descending link $\dlk S$ decomposes as the join
\[
\dlk S \cong \dulk S * \ddlk S
\,.
\]
Notice that there are $\Stab_G(S)$-equivariant homeomorphisms $\dulk S\cong \REAL{\dulkposet S}$ and $\ddlk S\cong \REAL{\ddlkposet S}$.

The next result provides a sufficient condition for showing that the descending links are equivariantly contractible, so that \autoref{morse-useful} applies.
The main assumption is essentially an equivariant version of the Link Criterion from~\cite{zaremsky18}, reinterpreted in our framework.

\begin{proposition}
\label{link-criterion}
Let $X$ be a metric space with a cocompact action of~$G$ by isometries, and let~$t>0$.
Assume that every ball in $X$ is finite.
Assume that there exists a $G$-map $\MOR{\zeta}{\poset(X)}{\poset(X)}$ such that the following conditions hold for all~$S\in \poset(X)$ with~$\diam(S)>t$:
\begin{enumerate}[label=(\arabic*)]
\item
\label{link-criterion-down}
if $\zeta(S)\subseteq S$ then $\diam(\zeta(S))<\diam(S)$, and for all $S^\vee\subseteq S$ with $\diam(S^\vee)<\diam(S)$ also $\diam(S^\vee\cup \zeta(S))<\diam(S)$;
\item
\label{link-criterion-up}
if $\zeta(S)\not\subseteq S$ then for all $S^\wedge\supseteq S$ with $\diam(S^\wedge)=\diam(S)$ also $\diam(S^\wedge\cup \zeta(S))=\diam(S)$.
\end{enumerate}
Then for any $S\in \poset(X)$ with $\diam(S)>t$ the descending link $\dlk S$ is $\Stab_G(S)$-contractible,
and $\rips_t(X)$ is $G$-homotopy equivalent to $\rips_\infty(X)$.
\end{proposition}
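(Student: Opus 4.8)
The plan is to prove the two assertions in turn: first the $\Stab_G(S)$-contractibility of $\dlk S$ for every $S\in\poset(X)$ with $\diam(S)>t$, which is where the hypotheses on~$\zeta$ are used, and then the $G$-homotopy equivalence, which will follow formally from \autoref{morse-useful}. For the first assertion, fix $S$ with $\diam(S)>t$. As recorded just before the statement, since $\rips_\infty(X)$ is a flag complex the descending link $\dlk S$ is $\Stab_G(S)$-equivariantly homeomorphic to $\REAL{\mathcal{Q}_S}$, where $\mathcal{Q}_S=\dulkposet S\sqcup\ddlkposet S$ carries the order induced from~$\poset(X)$. I would show that $\mathcal{Q}_S$ is conically $\Stab_G(S)$-contractible, via the standard principle: if a $G$-poset $\mathcal{Q}$ has an element $q_0$ fixed by~$G$ and a $G$-equivariant order-preserving map $\MOR{f}{\mathcal{Q}}{\mathcal{Q}}$ with $q\subseteq f(q)\supseteq q_0$ for every~$q$, then the canonical natural transformations $\id\Rightarrow f\Leftarrow\mathrm{const}_{q_0}$ are automatically $G$-equivariant and, after passing to geometric realizations, give a $G$-homotopy contracting $\REAL{\mathcal{Q}}$ onto~$q_0$. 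Here $G=\Stab_G(S)$; note that, because $\zeta$ is a $G$-map and $g\cdot S=S$ for $g\in\Stab_G(S)$, both $\zeta(S)$ and $S\cup\zeta(S)$ are fixed by~$\Stab_G(S)$, and this makes the maps $q\mapsto q\cup\zeta(S)$ and $q\mapsto q\cup S\cup\zeta(S)$ automatically $\Stab_G(S)$-equivariant.

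The crucial point is that the two hypotheses on~$\zeta$ are exactly what is needed in the two complementary cases $\zeta(S)\subseteq S$ and $\zeta(S)\not\subseteq S$. If $\zeta(S)\subseteq S$, I would take $q_0=\zeta(S)$ and $f(q)=q\cup\zeta(S)$: condition~\ref{link-criterion-down} gives $\diam(\zeta(S))<\diam(S)$, so $q_0\in\dulkposet S\subseteq\mathcal{Q}_S$, and it gives $\diam(q\cup\zeta(S))<\diam(S)$ for $q\in\dulkposet S$, so $f$ carries $\dulkposet S$ into $\dulkposet S$, while on $\ddlkposet S$ one has $f(q)=q$ since $\zeta(S)\subseteq S\subsetneq q$; thus $f$ does land in~$\mathcal{Q}_S$. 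If $\zeta(S)\not\subseteq S$, I would take $q_0=S\cup\zeta(S)$ and $f(q)=q\cup S\cup\zeta(S)$: condition~\ref{link-criterion-up}, applied with $S^\wedge=S$ and with $S^\wedge=q$ for $q\in\ddlkposet S$, shows that $q_0$ and each such $f(q)$ strictly contain~$S$ and have diameter $\diam(S)$, hence lie in $\ddlkposet S\subseteq\mathcal{Q}_S$, while for $q\in\dulkposet S$ one has $f(q)=S\cup\zeta(S)=q_0$. In both cases $q\subseteq f(q)\supseteq q_0$ holds by inspection, so $\dlk S=\REAL{\mathcal{Q}_S}$ is $\Stab_G(S)$-contractible. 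I expect the only delicate point to be this bookkeeping — checking that $f(q)$ never accidentally equals~$S$ in the first case (so that it stays inside $\dulkposet S$ rather than leaving $\mathcal{Q}_S$) and always strictly contains~$S$ in the second — but this is exactly what the strict versus non-strict diameter comparisons built into conditions~\ref{link-criterion-down} and~\ref{link-criterion-up} provide.

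For the $G$-homotopy equivalence we may assume $X\neq\emptyset$ and $\rips_t(X)\neq\rips_\infty(X)$, since otherwise there is nothing to prove. Using the cofinality of $\{\rips_r(X)\}_{r}$ in $\{\rips_\infty(X)^{h\le s}\}_{s}$ recorded above, together with the facts that $h$ is discrete on vertices and $\{S\in\poset(X):\diam(S)\le t\}$ is an initial segment for the preorder defined by~$h$, we may choose $t'\in\R$ with $\rips_\infty(X)^{h\le t'}=\rips_t(X)$; then $\{S\in\poset(X):h(S)>t'\}=\{S\in\poset(X):\diam(S)>t\}$. By the first part, $\dlk S$ is $\Stab_G(S)$-contractible for every vertex $S$ with $h(S)>t'$, so \autoref{morse-useful}, applied with this~$t'$ and with $s=\infty$, gives that the inclusion $\rips_t(X)=\rips_\infty(X)^{h\le t'}\INTO\rips_\infty(X)$ is a $G$-homotopy equivalence.
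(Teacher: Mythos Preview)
Your proof is correct and follows essentially the same strategy as the paper's: a conical contraction of the descending link via the poset map $q\mapsto q\cup\zeta(S)$ (respectively $q\mapsto q\cup S\cup\zeta(S)$), together with the observation that $\zeta(S)$ is $\Stab_G(S)$-fixed, followed by an appeal to \autoref{morse-useful}. The only difference is organizational: the paper invokes the join decomposition $\dlk S\cong\dulk S*\ddlk S$ and contracts just one factor ($\dulk S$ in case~\ref{link-criterion-down}, $\ddlk S$ in case~\ref{link-criterion-up}), whereas you extend the same contracting map to all of~$\mathcal{Q}_S$ (by the identity on~$\ddlkposet S$ in the first case, by the constant map to~$q_0$ on~$\dulkposet S$ in the second) and contract the whole descending link at once---this avoids mentioning the join but is otherwise the same argument.
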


\begin{proof}
We only need to prove the first statement, since the second then follows from \autoref{morse-useful}.

Fix $S\in\poset(X)$ with $\diam(S)>t$.
Since $\zeta$ is $G$-equivariant, we have that $\Stab_G(S)\cdot \zeta(S)=\zeta(S)$.
We now treat the two cases separately.

\ref{link-criterion-down}
Assume that $\zeta(S)\subseteq S$.
Then $\diam(\zeta(S))<\diam(S)$, and for all $S^\vee\subseteq S$ with $\diam(S^\vee)<\diam(S)$ also $\diam(S^\vee\cup Z)<\diam(S)$.
This means that $\zeta(S)\in \dulkposet S$, and for all $S^\vee\in \dulkposet S$ also $S^\vee\cup \zeta(S)\in \dulkposet S$.
So we can define a map
\[
\MOR{f}{\dulkposet S}{\dulkposet S}
\,,
\quad
S^\vee\MAPSTO S^\vee\cup \zeta(S)
\,.
\]
Notice that $f$ is a $\Stab_G(S)$-equivariant poset map, and for all $S^\vee\in\dulkposet S$ we have
\[
S^\vee\subseteq f(S^\vee)\supseteq \zeta(S)
\,.
\]
These conditions ensure that the maps
$\REAL{\id_{\dulkposet S}}$ and~$\REAL{f}$
are $\Stab_G(S)$-homotopic, as are $\REAL{f}$ and~$\REAL{c}$ for $c$ the map sending all~$S^\vee$ to~$\zeta(S)$.
So $\id_{\REAL{\dulkposet S}}=\REAL{\id_{\dulkposet S}}$ is $\Stab_G(S)$-homotopic to a constant map, i.e., $\dulk S\cong \REAL{\dulkposet S}$ is $\Stab_G(S)$-contractible.

\ref{link-criterion-up}
Assume that $\zeta(S)\not\subseteq S$.
Then for all $S^\wedge\supseteq S$ with $\diam(S^\wedge)=\diam(S)$ also $\diam(S^\wedge\cup \zeta(S))=\diam(S)$.
This means that for all $S^\wedge\in \ddlkposet S$ also $S^\wedge\cup \zeta(S)\in \ddlkposet S$.
So we can define a $\Stab_G(S)$-equivariant poset map
\[
\MOR{f}{\ddlkposet S}{\ddlkposet S}
\,,
\quad
S^\wedge\MAPSTO S^\wedge\cup \zeta(S)
\,,
\]
and for all $S^\wedge\in\ddlkposet S$ we have
\[
S^\wedge\subseteq f(S^\wedge)\supseteq S\cup \zeta(S)
\,.
\]
Notice that $S\cup \zeta(S)$ is in $\ddlkposet S$, since $\zeta(S)\not\subseteq S$.
Then arguing as above we can conclude that $\ddlk S\cong \REAL{\ddlkposet S}$ is $\Stab_G(S)$-contractible.

Since $\dlk S$ is the join of $\dulk S$ and $\ddlk S$, in either case we conclude $\dlk S$ is $\Stab_G(S)$-contractible.
\end{proof}

We can now prove our main result.

\begin{theorem}
\label{main-precise}
Let $G$ be a group acting properly and cocompactly by isometries on an asymptotically CAT(0) geodesic metric space $X$.
Consider the orbit $G\cdot x_0$ of an arbitrary point $x_0\in X$, with the induced metric from~$X$.
Then, for any sufficiently large $t\in\R$, the Vietoris--Rips complex~$\rips_t(G\cdot x_0)$ is a finite model for the universal space for proper actions~$\Eu{G}$.
\end{theorem}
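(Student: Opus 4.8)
The plan is to apply \autoref{link-criterion} to the orbit $\mathcal{O}=G\cdot x_0$, equipped with the induced metric from $X$, so that the real content is the construction of an appropriate $G$-map $\zeta\colon\poset(\mathcal{O})\to\poset(\mathcal{O})$. The preliminaries cost nothing: since the action of $G$ on $X$ is proper and cocompact, the orbit $\mathcal{O}$ has finite balls and every finite subset of $\mathcal{O}$ has finite $G$-stabilizer, so \autoref{Rips=E_G} gives $\rips_\infty(\mathcal{O})=\Eu{G}$ and shows that $\rips_t(\mathcal{O})$ is a finite $G$-CW complex for every finite $t>0$; it therefore suffices to find $t$ with $\rips_t(\mathcal{O})\simeq_G\rips_\infty(\mathcal{O})$, which is precisely the second conclusion of \autoref{link-criterion}. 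Fix once and for all a constant $R_0$ such that every point of $X$ lies within distance $R_0$ of $\mathcal{O}$; such an $R_0$ exists because the action is cocompact.

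For $S\in\poset(\mathcal{O})$ write $r=\diam(S)$ and let $\rho(S)=\inf_{x\in X}\sup_{s\in S}d(x,s)$ be the circumradius of $S$ in $X$; since $d$ is $G$-invariant and the action is by isometries, $\rho$ is $G$-equivariant with respect to the trivial action on~$\R$. I would then set
\[
\zeta(S)=\SET{p\in\mathcal{O}}{\textstyle\sup_{s\in S}d(p,s)\le\rho(S)+R_0+1}\,.
\]
This is visibly $G$-equivariant; it is nonempty, since some $\hat c\in X$ has $\sup_{s\in S}d(\hat c,s)\le\rho(S)+1$ and some point of $\mathcal{O}$ lies within $R_0$ of $\hat c$; and it is finite, since it lies in a ball of $\mathcal{O}$. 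So $\zeta$ is a well-defined $G$-map, an equivariant incarnation of the construction in~\cite{zaremsky18}. The only thing we need about $\zeta$, for every $S$ with $r=\diam(S)$ larger than a suitable threshold $t$, is:
\begin{enumerate}[label=(\alph*)]
\item $\diam(\zeta(S))<r$;
\item $d(p,s)<r$ for all $p\in\zeta(S)$ and $s\in S$;
\item $d(p,s')<r$ for all $p\in\zeta(S)$ and all $s'\in\mathcal{O}$ with $d(s',s)\le r$ for every $s\in S$.
\end{enumerate}
Granting (a)--(c), the hypotheses of \autoref{link-criterion} hold: if $\zeta(S)\subseteq S$, then (a) is the first required inequality, and for any $S^\vee\subseteq S$ with $\diam(S^\vee)<r$ the set $S^\vee\cup\zeta(S)$ has diameter the maximum of $\diam(S^\vee)$, $\diam(\zeta(S))$, and the cross-distances, all $<r$ by (a) and (b); if $\zeta(S)\not\subseteq S$, then for any $S^\wedge\supseteq S$ with $\diam(S^\wedge)=r$ every element of $S^\wedge$ is within $r$ of every element of $S$, so by (a) and (c) the set $S^\wedge\cup\zeta(S)$ has diameter exactly $r$. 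Hence every descending link $\dlk S$ with $\diam(S)>t$ is $\Stab_G(S)$-contractible, and \autoref{link-criterion} yields $\rips_t(\mathcal{O})\simeq_G\rips_\infty(\mathcal{O})=\Eu{G}$, a finite $G$-CW complex, for every such $t$.

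Everything thus reduces to (a)--(c), which is where the asymptotically CAT(0) geometry is used. Two facts suffice. First, $\tfrac12 r\le\rho(S)\le\tfrac{\sqrt3}{2}r+\sigma(r)$: the lower bound holds because for a diametral pair $s_1,s_2\in S$ and any $x\in X$ one has $r=d(s_1,s_2)\le d(x,s_1)+d(x,s_2)\le 2\sup_{s\in S}d(x,s)$, and the upper bound follows by taking $\hat c$ to be the midpoint of a geodesic $[s_1,s_2]$ and applying \autoref{asymp-CAT(0)} to the geodesic triangles $\hat c$ forms with the remaining points of $S$ (the Euclidean median identity giving $d(\overline{\hat c},\overline s)\le\tfrac{\sqrt3}{2}r$ in each comparison triangle). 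Second, an asymptotically CAT(0) circumcenter inequality: if $\hat c\in X$ is a near-circumcenter of $S$, $\sup_{s\in S}d(\hat c,s)\le\rho(S)+1$, then for every $\hat x\in X$
\[
d(\hat x,\hat c)\ \le\ \sqrt{\ \Bigl(\textstyle\sup_{s\in S}d(\hat x,s)\Bigr)^{2}-\rho(S)^{2}\ }\ +\ \tau(r)
\]
for some sublinear function $\tau$. Granting this, apply it with $\hat x=p\in\zeta(S)$, using $\rho(S)=O(r)$ so that the quantity under the square root is $O(r)$, to get $d(p,\hat c)=o(r)$, whence $\diam(\zeta(S))=o(r)$ and $d(p,s)\le\rho(S)+R_0+1\le\tfrac{\sqrt3}{2}r+o(r)$; and apply it with $\hat x=s'$ as in (c), using $\rho(S)\ge\tfrac12 r$, to get $d(\hat c,s')\le\sqrt{r^{2}-\rho(S)^{2}}+\tau(r)\le\tfrac{\sqrt3}{2}r+o(r)$, so that $d(p,s')\le d(p,\hat c)+d(\hat c,s')\le\tfrac{\sqrt3}{2}r+o(r)$. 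Since $\tfrac{\sqrt3}{2}<1$, all three bounds are $<r$ once $t$ is large enough, which establishes (a)--(c). (Any constant $<1$ in place of $\tfrac{\sqrt3}{2}$ works just as well, so the elementary median bound on $\rho(S)$ is enough and one need not invoke the sharp Jung constant.)

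The main obstacle is the circumcenter inequality above. In an honest CAT(0) space it is standard, following from convexity of $x\mapsto d(x,s)^{2}$ along geodesics (the CN inequality) together with the minimality of the circumradius. Carrying the argument over to $X$ requires two adjustments: by \autoref{asymp-CAT(0)} the CN inequality now holds only up to an additive error governed by $\sigma$, and the circumradius $\rho(S)$ need not be attained, so one must work with near-circumcenters throughout. The work is to propagate the $\sigma$-error through the estimate and check that it contributes only a sublinear term to $d(\hat x,\hat c)$, so that it is absorbed once $\diam(S)>t$ for $t$ large; equivalently, one may pass to the asymptotic cones of $X$, which are CAT(0) by Kar's characterization and in which $\rho$, near-circumcenters, and the inequality all behave well, and deduce the sublinear-error statement from the exact one.
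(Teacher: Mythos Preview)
Your strategy is sound and the reduction to (a)--(c) is correct, but you take a harder route than the paper. Both proofs construct a $G$-map~$\zeta$ and verify \autoref{link-criterion}; the difference is in the choice of~$\zeta$. The paper does not use circumcenters at all. Instead it sets $\omega(S)=\{(x,y)\in S\times S: d(x,y)=\diam(S)\}$, chooses $G$-equivariantly for each pair $(x,y)$ an orbit point $\mu(x,y)$ within a fixed constant~$r$ of the midpoint of some geodesic $[x,y]$, and puts $\zeta(S)=\mu(\omega(S))$. The only estimate then needed is that for $\tilde z$ the midpoint of $[x,y]$ with $d(x,y)=\diam(S)$ and any $s$ with $d(s,x),d(s,y)\le\diam(S)$, one has $d(\tilde z,s)\le\tfrac{\sqrt3}{2}\diam(S)+\sigma(\diam(S))$. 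This is a one-line consequence of \autoref{asymp-CAT(0)} and the Euclidean median inequality in the comparison triangle; no CN inequality, no circumcenter lemma. The paper even extracts an explicit threshold: $t>\tfrac{2r}{2-\sqrt3}$ together with $\tfrac{\sigma(t')}{t'}<1-\tfrac{r}{t'}-\tfrac{\sqrt3}{2}$ for all $t'\ge t$.

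Your circumcenter approach does work. The sharp inequality $d(\hat x,c)^2+\rho(S)^2\le(\sup_{s\in S}d(\hat x,s))^2$ holds in CAT(0) spaces (parametrize the geodesic from $c$ to~$\hat x$, use that $y\mapsto d(y,s)^2$ satisfies the quadratic convexity inequality along geodesics, invoke minimality of $\rho$ at~$c$, and let the parameter tend to~$0$), and it extends to asymptotically CAT(0) with an $o(r^2)$ error in the squared form; to absorb that error one must let the parameter tend to~$0$ at a controlled rate rather than immediately, which is the ``propagation'' you allude to but do not carry out. So what you flag as the main obstacle is a genuine piece of work, not a formality, and it is exactly what the paper's midpoint trick sidesteps. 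The trade-off is that your $\zeta$ is canonical (no choice of geodesic or diametral pair) and perhaps more conceptual, while the paper's is elementary and yields a self-contained proof with explicit constants.
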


The strategy is of course to apply \autoref{link-criterion} to~$G\cdot x_0$.
The idea for the construction of the map
\(
\MOR{\zeta}{\poset(G\cdot x_0)}{\poset(G\cdot x_0)}
\)
comes from the following elementary observation:
in euclidean space, the diameter of a finite set~$S$ of points decreases if we replace~$S$ by the set of midpoints of pairs in~$S$, whereas if we add these midpoints to~$S$ the diameter does not change but the cardinality increases.
Some technical modifications are required to make this work for~$G\cdot x_0$, since first of all midpoints of points in an orbit need not lie in the orbit, and secondly the ambient space~$X$ is not euclidean but only asymptotically CAT(0), so extra care is needed for the diameter estimates.
Here are the details.

\begin{proof}
Let $X_0=G\cdot x_0$ for some arbitrary but fixed $x_0\in X$.
Since the action of $G$ on $X$ is proper, every ball in $X_0$ is finite.
Since $G$ acts properly and cocompactly by isometries on~$X$, all other assumptions of \autoref{Rips=E_G} are satisfied by~$X_0$, and so we conclude that $\rips_\infty(X_0)$ is a model for~$\Eu{G}$, and $\rips_t(X_0)$ is a finite proper $G$-CW complex.

Hence it is enough to show that $\rips_t(X_0)$ is $G$-homotopy equivalent to~$\rips_\infty(X_0)$ for any sufficiently large $t\in\R$.
In order to do this, we want to apply \autoref{link-criterion}, and so we need to construct a $G$-map~$\MOR{\zeta}{\poset(X_0)}{\poset(X_0)}$.

Since the action of~$G$ on~$X$ is cocompact, there exists an~$r>0$ such that every point in~$X$ is within~$r$ of some point of~$X_0$.
Let $\sigma$ be a sublinear function as in \autoref{asymp-CAT(0)}.
Now choose~$t$ large enough so that $t>\frac{2}{2-\sqrt{3}}r$ and for all $t'\ge t$ we have $\frac{\sigma(t')}{t'}<1-\frac{r}{t'}-\frac{\sqrt{3}}{2}$ (the first condition ensures $0<1-\frac{r}{t}-\frac{\sqrt{3}}{2}$, making the second condition possible).
Define
\[
\MOR{\omega}{\poset(X_0)}{\poset(X_0\times X_0)},
\quad
\omega(S)=\SET{(x,y)\in S\times S}{d(x,y)=\diam(S)},
\]
so $\omega(S)$ consists of all the pairs of points in~$S$ ``witnessing'' the diameter of~$S$.
With respect to the diagonal action on the target, the map $\omega$ is $G$-equivariant.
Now for each pair $(x,y)\in X_0\times X_0$ choose a point $\mu(x,y)\in X_0$ that is at distance at most~$r$ from the midpoint of some geodesic in~$X$ from $x$ to~$y$.
Since $G$ acts by isometries, we can do this in a way that leads to a $G$-equivariant function $\MOR{\mu}{X_0\times X_0}{X_0}$.
Finally define
\[
\MOR{\zeta}{\poset(X_0)}{\poset(X_0)},
\
\zeta(S)=\mu(\omega(S))=\SET{\mu(x,y)}{x,y\in S,\ d(x,y)=\diam(S)}
\]
and notice that $\zeta$ is $G$-equivariant by construction.

Now let $S\in \poset(X_0)$ with $\diam(S)>t$.
We claim that the conditions \ref{link-criterion-down} and~\ref{link-criterion-up} of \autoref{link-criterion} hold.

\ref{link-criterion-down}
If $\zeta(S)\subseteq S$, we need to show that $\diam(\zeta(S))<\diam(S)$, and that for all $S^\vee\subseteq S$ with $\diam(S^\vee)<\diam(S)$ also $\diam(S^\vee\cup \zeta(S))<\diam(S)$.
It suffices to show that for any $z\in \zeta(S)$ and $s\in S$, $d(z,s)<\diam(S)$.
Say $z=\mu(x,y)$ for $x,y\in S$, and let $\tilde{z}$ be the midpoint of a geodesic from $x$ to $y$ such that $d(z,\tilde{z})\le r$.
By the triangle inequality it now suffices to show that $d(\tilde{z},s)<\diam(S)-r$.
Using comparison triangles we see that $d(\tilde{z},s)\le \frac{\sqrt{3}}{2}\diam(S)+\sigma(\diam(S))$.
Since $\sigma(\diam(S))<\diam(S)\bigl(1-\frac{r}{\diam(S)}-\frac{\sqrt{3}}{2}\bigr)$, we get
\[
d(\tilde{z},s)< \frac{\sqrt{3}}{2}\diam(S) + \diam(S)\left(1-\frac{r}{\diam(S)}-\frac{\sqrt{3}}{2}\right) = \diam(S)-r
\]
as desired.

\ref{link-criterion-up}
If $\zeta(S)\not\subseteq S$, we need to show that for all $S^\wedge\supseteq S$ with $\diam(S^\wedge)=\diam(S)$ also $\diam(S^\wedge\cup\zeta(S))=\diam(S)$.
It suffices to show that for any $z\in\zeta(S)$ and $\hat{s}\in S^\wedge$, $d(z,\hat{s})\le \diam(S)$.
This follows by the same calculation as above, using $\hat{s}$ in place of $s$, since we have $d(\hat{s},x),d(\hat{s},y)\le\diam(S)$ and hence $d(\tilde{z},\hat{s})\le\frac{\sqrt{3}}{2}\diam(S)+\sigma(\diam(S))$.

Thus both conditions of \autoref{link-criterion} are satisfied, completing the proof.
\end{proof}

\begin{example}[Hyperbolic groups]
\label{hyperbolic}
Let $G$ be a $\delta$-hyperbolic group, which here we take to mean that geodesic triangles are $\delta$-thin as in~\cite{kar08}*{Section~2.2}.
Let $X$ be the Cayley graph of $G$.
Then $X$ is asymptotically CAT(0), and the sublinear function~$\sigma$ from \autoref{asymp-CAT(0)} can be taken to be constant~$\delta$~\cite{kar08}*{Proposition~3}.
Inspecting the proof of \autoref{main-precise}, noting that every point in~$X$ is within distance~$1/2$ of an element of~$G=G\cdot 1$, we see that $\rips_t(G)=\Eu{G}$ for any $t>\frac{2\delta+1}{2-\sqrt{3}}$.
This bound is roughly $t>7.465\delta + 3.733$.
The bound in \cite{meintrup02} is $t\ge 16\delta+8$, but it is difficult to compare this to our bound since we use different notions of $\delta$-hyperbolicity, and converting between the definitions inevitably changes the~$\delta$, so we do not claim our bound is better than Meintrup--Schick's.
\end{example}

\begin{question}
In the proof of \autoref{main-precise}, the asymptotically CAT(0) assumption comes into play primarily because in a euclidean equilateral triangle of diameter~$t$ the distance from a vertex to the midpoint of the opposite edge is~$\frac{\sqrt{3}}{2}t$, which is less than~$t$.
One could replace the~$\frac{\sqrt{3}}{2}$ with any value less than~$1$ though, and everything would still work, albeit with different bounds.
So, the question arises, is there a natural condition weaker than asymptotically CAT(0) for which the proof of \autoref{main-precise} still works?
It would also be very interesting to find a condition similar to CAT(0) or asymptotically CAT(0) that is invariant under quasi-isometries (cf.~\autoref{qi}) and for which the result of \autoref{main-precise} is true.
\end{question}

\begin{question}
\label{Rips(G)}
Keep the assumptions and notation of \autoref{main-precise}.
The metric on~$G\cdot x_0$ induced by the asymptotically CAT(0) metric on~$X$ is quasi-isometric to the word metric on~$G$, by the Schwarz--Milnor Lemma.
Recall that this does not imply that the word metric on~$G$ is asymptotically CAT(0), as explained in~\autoref{qi}.
Moreover, Vietoris--Rips complexes are not homotopy invariant under quasi-isometries, and therefore we cannot conclude from our arguments that~$\rips_t(G)$ itself is an~$\Eu{G}$.
A special case when this conclusion does hold is when we can take~$X$ to be the Cayley graph of~$G$, as in the hyperbolic case; see \autoref{hyperbolic}.
This leads to the following question:
is it true or false that, if $G$ is an asymptotically CAT(0) or even a CAT(0) group, then its own Vietoris--Rips complex $\rips_t(G)$ with respect to a word metric is an~$\Eu{G}$ for large enough~$t$?
\end{question}


\vfill

\begin{bibdiv}
\begin{biblist}

\bib{abels78}{article}{
   author={Abels, Herbert},
   title={A universal proper $G$-space},
   journal={Math. Z.},
   volume={159},
   date={1978},
   number={2},
   pages={143--158},
   issn={0025-5874},
   review={\MR{501039} \Zbl{0388.54027}},
   doi={\DOI{10.1007/BF01214487}},
}
\bib{bestvina97}{article}{
   author={Bestvina, Mladen},
   author={Brady, Noel},
   title={Morse theory and finiteness properties of groups},
   journal={Invent. Math.},
   volume={129},
   date={1997},
   number={3},
   pages={445--470},
   issn={0020-9910},
   review={\MR{1465330} \Zbl{0888.20021}},
   doi={\DOI{10.1007/s002220050168}},
}
\bib{baum94}{article}{
   author={Baum, Paul},
   author={Connes, Alain},
   author={Higson, Nigel},
   title={Classifying space for proper actions and $K$-theory of group $C^\ast$-algebras},
   conference={
      title={$C^\ast$-algebras: 1943--1993},
      address={San Antonio, TX},
      date={1993},
   },
   book={
      series={Contemp. Math.},
      volume={167},
      publisher={Amer. Math. Soc., Providence, RI},
   },
   date={1994},
   pages={240--291},
   review={\MR{1292018} \Zbl{0830.46061}},
   doi={\DOI{10.1090/conm/167/1292018}},
}
\bib{BDP}{article}{
   author={Bárcenas, Noé},
   author={Degrijse, Dieter},
   author={Patchkoria, Irakli},
   title={Stable finiteness properties of infinite discrete groups},
   journal={J. Topol.},
   volume={10},
   date={2017},
   number={4},
   pages={1169--1196},
   issn={1753-8416},
   review={\MR{3743073} \Zbl{06827938}},
   doi={\DOI{10.1112/topo.12035}},
}
\bib{bestvina08}{article}{
   author={Bestvina, Mladen},
   title={PL Morse theory},
   journal={Math. Commun.},
   volume={13},
   date={2008},
   number={2},
   pages={149--162},
   issn={1331-0623},
   review={\MR{2488666} \Zbl{1222.20030}},
}
\bib{BH}{book}{
   author={Bridson, Martin R.},
   author={Haefliger, André},
   title={Metric spaces of non-positive curvature},
   series={Grundlehren der Mathematischen Wissenschaften},
   volume={319},
   publisher={Springer, Berlin},
   date={1999},
   pages={xxii+643},
   isbn={3-540-64324-9},
   review={\MR{1744486} \Zbl{0988.53001}},
   doi={\DOI{10.1007/978-3-662-12494-9}},
   note={DOI \DOI{10.1007/978-3-662-12494-9}},
}
\bib{DHLPS}{article}{
   author={Degrijse, Dieter},
   author={Hausmann, Markus},
   author={Lück, Wolfgang},
   author={Patchkoria, Irakli},
   author={Schwede, Stefan},
   title={Proper equivariant stable homotopy theory},
   date={2020},
   status={to appear in Mem. Amer. Math. Soc., available at \arXiv{1908.00779v2}},
}
\bib{kar08}{thesis}{
   author={Kar, Aditi},
   title={Discrete groups and CAT(0) asymptotic cones},
   type={Ph.D. Thesis},
   organization={Ohio State University},
   date={2008},
   pages={99},
   isbn={978-0549-94479-9},
   review={\MR{2712674}},
}
\bib{kar11}{article}{
   author={Kar, Aditi},
   title={Asymptotically CAT(0) groups},
   journal={Publ. Mat.},
   volume={55},
   date={2011},
   number={1},
   pages={67--91},
   issn={0214-1493},
   review={\MR{2779576} \Zbl{1271.20057}},
   doi={\DOI{10.5565/PUBLMAT_55111_04}},
}
\bib{LRRV}{article}{
   author={Lück, Wolfgang},
   author={Reich, Holger},
   author={Rognes, John},
   author={Varisco, Marco},
   title={Algebraic K-theory of group rings and the cyclotomic trace map},
   journal={Adv. Math.},
   volume={304},
   date={2017},
   pages={930--1020},
   issn={0001-8708},
   review={\MR{3558224} \Zbl{1357.19002}},
   doi={\DOI{10.1016/j.aim.2016.09.004}},
}
\bib{L}{article}{
   author={Lück, Wolfgang},
   title={Survey on classifying spaces for families of subgroups},
   conference={
      title={Infinite groups: geometric, combinatorial and dynamical aspects},
   },
   book={
      series={Progr. Math.},
      volume={248},
      publisher={Birkhäuser, Basel},
   },
   date={2005},
   pages={269--322},
   review={\MR{2195456} \Zbl{1117.55013}},
   doi={\DOI{10.1007/3-7643-7447-0_7}},
}
\bib{meintrup02}{article}{
   author={Meintrup, David},
   author={Schick, Thomas},
   title={A model for the universal space for proper actions of a hyperbolic group},
   journal={New York J. Math.},
   volume={8},
   date={2002},
   pages={1--7},
   issn={1076-9803},
   review={\MR{1887695} \Zbl{0990.20027}},
}
\bib{O}{article}{
   author={Ontaneda, Pedro},
   title={Cocompact CAT(0) spaces are almost geodesically complete},
   journal={Topology},
   volume={44},
   date={2005},
   number={1},
   pages={47--62},
   issn={0040-9383},
   review={\MR{2104000} \Zbl{1068.53026}},
   doi={\DOI{10.1016/j.top.2004.01.010}},
}
\bib{piggott10}{article}{
   author={Piggott, Adam},
   author={Ruane, Kim},
   author={Walsh, Genevieve S.},
   title={The automorphism group of the free group of rank 2 is a CAT(0) group},
   journal={Michigan Math. J.},
   volume={59},
   date={2010},
   number={2},
   pages={297--302},
   issn={0026-2285},
   review={\MR{2677622} \Zbl{1205.20050}},
   doi={\DOI{10.1307/mmj/1281531457}},
}
\bib{riley03}{article}{
   author={Riley, Timothy R.},
   title={Higher connectedness of asymptotic cones},
   journal={Topology},
   volume={42},
   date={2003},
   number={6},
   pages={1289--1352},
   issn={0040-9383},
   review={\MR{1981358} \Zbl{1038.20031}},
   doi={\DOI{10.1016/S0040-9383(03)00002-8}},
}
\bib{R-K}{article}{
   author={Rosenthal, David},
   title={Splitting with continuous control in algebraic $K$-theory},
   journal={$K$-Theory},
   volume={32},
   date={2004},
   number={2},
   pages={139--166},
   issn={0920-3036},
   review={\MR{2083578} \Zbl{1068.19008}},
   doi={\DOI{10.1023/B:KTHE.0000037563.35102.0d}},
}
\bib{R-L}{article}{
   author={Rosenthal, David},
   title={Continuous control and the algebraic $L$-theory assembly map},
   journal={Forum Math.},
   volume={18},
   date={2006},
   number={2},
   pages={193--209},
   issn={0933-7741},
   review={\MR{2218417} \Zbl{1115.18006}},
   doi={\DOI{10.1515/FORUM.2006.012}},
}
\bib{RS}{article}{
   author={Rosenthal, David},
   author={Sch\"{u}tz, Dirk},
   title={On the algebraic $K$- and $L$-theory of word hyperbolic groups},
   journal={Math. Ann.},
   volume={332},
   date={2005},
   number={3},
   pages={523--532},
   issn={0025-5831},
   review={\MR{2181761} \Zbl{1114.20024}},
   doi={\DOI{10.1007/s00208-005-0634-6}},
}
\bib{RV}{article}{
   author={Reich, Holger},
   author={Varisco, Marco},
   title={Algebraic $K$-theory, assembly maps, controlled algebra, and trace methods},
   book={
      title={Space -- Time -- Matter. Analytic and Geometric Structures},
      publisher={De~Gruyter},
      place={Berlin},
   },
   date={2018},
   pages={1--50},
   review={\MR{3792301} \Zbl{1409.19001}},
   doi={\DOI{10.1515/9783110452150-001}},
}
\bib{zaremsky18}{article}{
   author={Zaremsky, Matthew C. B.},
   title={Bestvina-Brady discrete Morse theory and Vietoris-Rips complexes},
   date={2021},
   status={to appear in Amer. J. Math., available at \arXiv{1812.10976v3}},
}

\end{biblist}
\end{bibdiv}

\bigskip

\end{document}